\pgfplotsset{compat=1.15}
\def\new{\textnormal{new}}
\def\tra{\textnormal{tra}}
\def\alg{\textnormal{alg}}
\setlist{noitemsep,nosep,listparindent=\parindent}
\setlist[itemize]{label=\guillemotright}
\setlist[enumerate,1]{ref=\thesubsection.\arabic*}
\setlist[enumerate,2]{label=\alph*.,ref=\theenumi.\alph*}
\title{Automorphisms of a family of surfaces with $p_g=q=2$ and $K^2=7$}
\author{Matteo Penegini and Roberto Pignatelli}
\date{}
\def\QQl{\QQ_\ell}
\newcommand\Aut{\text{\rm Aut}}
\newcommand\Pic{\text{\rm Pic}}
\newcommand\OO{{\mathcal{O}}}
\newcommand{\CC}{{\mathbb C}}
\newcommand{\HH}{{\mathbb H}}
\newcommand{\PP}{{\mathbb P}}
\newcommand{\QQ}{{\mathbb Q}}
\newcommand{\ZZ}{{\mathbb Z}}
\newtheorem{thm}{Theorem}[section]
\newtheorem{lem}[thm]{Lemma}
\newtheorem{cor}[thm]{Corollary}
\newtheorem{prop}[thm]{Proposition}
\theoremstyle{definition}
\newtheorem{defin}[thm]{Definition}
\newtheorem{rem}[thm]{Remark}
\theoremstyle{remark}
\newcommand{\oo}{\mathcal{O}}
\def\Spec{\textnormal{Spec}}
\def\Mot{\textnormal{Mot}}
\def\AbMot{\textnormal{AbMot}}
\def\HH{\mathrm{H}_{\textnormal{mot}}}
\def\ChMot{\mathcal{M}_{\textnormal{rat}}}
\def\SmPr{\textnormal{SmPr}}
\def\opp{\textnormal{op}}
\def\AJ{\textnormal{AJ}}
\def\Spec{\textnormal{Spec}}
\def\Mot{\textnormal{Mot}}
\def\AbMot{\textnormal{AbMot}}
\def\HH{\mathrm{H}_{\textnormal{mot}}}
\def\GB{G_{\textnormal{MT}}}
\def\Gl{G_\ell}
\def\Glc{\Gl^\circ}
\def\HB{H}
\def\Hl{H_\ell}
\newcommand{\ra}{\rightarrow}
\def\new{\textnormal{new}}
\def\tra{\textnormal{tra}}
\def\alg{\textnormal{alg}}
\begin{document}
\maketitle


\begin{center}
{\it In memoria di Gianfranco, un amico e un maestro}
\end{center}
\begin{abstract} 
We compute the automorphism group of all the elements of a family of surfaces of general type with  $p_g=q=2$ and $K^2=7$, originally constructed by C. Rito in \cite{rito}. We discuss the consequences of our results towards the Mumford-Tate conjecture.
\end{abstract}

\Footnotetext{{}}{\textit{2020 Mathematics Subject Classification}:
14J29, 14J10, 14B12}

\Footnotetext{{}} {\textit{Keywords}: Surface of general type,
Albanese map, Automorphisms of varieties}

\Footnotetext{{}} {\textit{Version:} \today}
\section{Introduction}
 
The classification of general type surfaces with low numerical invariants is unanimously considered a very difficult problem to tackle. It is already difficult to construct new surfaces with low numerical invariants. Therefore, as soon as new examples are found, it is natural to test the famous conjectures on them. This short note stems from the question whether it is possible to verify the Mumford-Tate conjecture for surfaces $S$ of general type with $p_g=q=2$ and $K^2=7$ first constructed by Rito in \cite{rito} and later studied by the authors in \cite{MatteoRoberto}. 

These surfaces $S$ are obtained as a generically finite double covering of an abelian surface $A$, which turns out to be the Albanese variety of $S$, branched along a curve with a singular point of type $(3,3)$ and no other singularities. These surfaces give rise to three disjoint open subsets in the  Gieseker moduli space $\mathcal{M}^{\mathrm{can}}_{2, \, 2, \,7}$  which are all irreducible, generically smooth of dimension 2, that we shall denote by $\mathcal{M}_1$,  $\mathcal{M}_2$ and  $\mathcal{M}_4$. 

The Mumford-Tate conjecture for surfaces is still an open problem and only in very few examples it has been verified as true, see for example \cite{Moonenh1}. A strategy to prove the conjecture for surfaces with $p_g=q=2$ and of maximal Albanese dimension is outlined in the article \cite{CoPe20}. The strategy reduces to finding  geometric quotients $X$ of $S$ that are K3 surfaces whose weight 2 Hodge structure is a sub-Hodge structure of the weight 2 Hodge structure of $S$ \emph{orthogonal} to the the sub-Hodge structure coming from the Albanese surface of $S$. This strategy proved to be very successful in many cases, see  \cite{CoPe20}. The first question toward exploiting the strategy is to calculate the automorphism of the surfaces $S$ and then classify all possible quotients. This is the content of the main theorem of this note.

\begin{thm}\label{theo_main}  The automorphism group of the surface $S$ of general type with $p_g=q=2$ and $K^2=7$  constructed by Rito in \cite{rito} is a product of cyclic groups
\[Aut(S) \cong G \times \ZZ/2\ZZ = G \times \langle \sigma \rangle,
\]
where $S/ \langle \sigma \rangle \cong A=Alb(S)$, while 
\begin{enumerate}
\item $G$ is trivial if $[S]  \in \mathcal{M}_4$;
\item $G \cong \ZZ/4$ if $[S]  \in \mathcal{M}_1$ and  satisfies condition $1$ of Proposition \ref{prop:AutA}
\item $G \cong \ZZ/2$  in all the other cases.
\end{enumerate}
\end{thm}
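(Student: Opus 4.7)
The plan is to exploit the fact that $\pi\colon S \to A = \Alb(S)$ is a Galois double cover with deck involution $\sigma$ and branch divisor $B$ whose only singularity is the $(3,3)$-point, which we normalize to sit at $0 \in A$.

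First, Albanese functoriality provides a natural homomorphism $\rho\colon \Aut(S) \to \Aut(A)$. Its image must preserve $B$ and, since the $(3,3)$-point is the unique singularity of $B$, must fix $0$; thus $\rho$ lands in the finite group $\Aut(A,B)_0$ of linear automorphisms of $A$ preserving $B$. Because $\pi$ is Galois with group $\langle\sigma\rangle$, the kernel of $\rho$ equals $\langle\sigma\rangle$. I would then invoke Proposition~\ref{prop:AutA} to identify the candidate image $\Aut(A,B)_0$ with the group $G$ claimed in the theorem: trivial on $\mathcal{M}_4$, $\ZZ/4$ on the sublocus of $\mathcal{M}_1$ satisfying condition (1), and $\ZZ/2$ otherwise.

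Next, I would prove surjectivity of $\rho$. The double cover $\pi$ is determined by a line bundle $\mathcal{L}$ on $A$ with $\mathcal{L}^{\otimes 2} \cong \OO_A(B)$, and an element $g \in \Aut(A,B)_0$ lifts through $\pi$ if and only if the obstruction class $g^*\mathcal{L} \otimes \mathcal{L}^{-1} \in \Pic^0(A)[2]$ vanishes. Using the explicit description of $\mathcal{L}$ from \cite{MatteoRoberto} and the fact that $g$ fixes $0$ (so it acts on $\Pic^0(A)$ through its linear part), I would verify case by case that the obstruction is trivial.

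The most delicate step will be the splitting of the resulting extension
\[
1 \longrightarrow \langle\sigma\rangle \longrightarrow \Aut(S) \xrightarrow{\;\rho\;} G \longrightarrow 1.
\]
Since $\sigma$ is central in $\Aut(S)$, this amounts to deciding, for each generator $g \in G$ of order $n$, whether some lift $\tilde g$ satisfies $\tilde g^n = 1$ (giving the direct product $G \times \langle\sigma\rangle$) or every lift satisfies $\tilde g^n = \sigma$ (giving the cyclic group $\ZZ/2n$); in case (2) this is the difference between $\ZZ/4 \times \ZZ/2$ and $\ZZ/8$. To rule out the latter, I would analyze the local action of $\tilde g$ at a fixed point of $g$ by writing the cover locally as $z^2 = f(x,y)$ with $g^*f = \lambda f$; then the lift has order $n$ precisely when $\lambda$ admits an $n$-th root of unity as a square root. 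Direct inspection of Rito's construction \cite{rito} near the $(3,3)$-point, combined with the surjectivity of $\rho$ already established, will produce a lift of the required order in each case, yielding the direct product decomposition claimed in the theorem.
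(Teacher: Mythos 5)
Your proposal matches the paper's strategy almost step-for-step: Albanese functoriality yields the extension $1\to\langle\sigma\rangle\to\Aut(S)\to G\to1$, Proposition~\ref{prop:AutA} computes $G$ in each of the three cases, and the splitting is decided by a local analysis of the double cover $z^2=f(x,y)$ at the $(3,3)$-point. The paper fills in your final, left-vague step by observing that $f(x,y)=y^3+O(4)$ near $p$ while any $g\in G$ only rescales $x$, which forces $g^*f=f$ and so yields a lift $(x,y,z)\mapsto(kx,y,z)$ of the same order as $g$ commuting with $\sigma$; it also sidesteps your separate surjectivity/obstruction check by building $\varphi^*L=L$ directly into the definition of $G$.
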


The second step of the strategy is to identify the quotients. We have at once 

\begin{cor}\label{cor_main} For all $H \leq \textrm{Aut}(S)$,  the quotients $X = S/H$ are irregular surfaces, i.e., $q(X) \geq 1$. 
\end{cor}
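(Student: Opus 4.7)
The plan is to prove the corollary by studying, for each subgroup $H\le\Aut(S)$, the space of $H$-invariant global holomorphic $1$-forms on $S$. Since the irregularity of a smooth model of $S/H$ equals $\dim H^0(S,\Omega^1_S)^H$ (quotient singularities being rational and so not contributing to the space of $1$-forms), it suffices to exhibit a nonzero $H$-invariant $1$-form on $S$ for every such $H$.

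First one transports the question to the Albanese variety. Because $q(S)=q(A)=2$, the Albanese morphism induces an $\Aut(S)$-equivariant isomorphism $H^0(A,\Omega^1_A)\cong H^0(S,\Omega^1_S)$, where $\Aut(S)$ acts on $A$ by functoriality of the Albanese. Theorem~\ref{theo_main} identifies $\sigma$ with the covering involution of the Albanese, hence $\sigma$ induces the identity on $A$ and acts trivially on $H^0(A,\Omega^1_A)$. Consequently the action of $\Aut(S)=G\times\langle\sigma\rangle$ on $H^0(S,\Omega^1_S)$ factors through the projection to $G$, and it is enough to prove that the generator of the cyclic group $G$ admits the eigenvalue $1$ on $H^0(A,\Omega^1_A)$; this automatically propagates to every power of the generator, and hence to every subgroup of $G$.

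If $G$ is trivial there is nothing to prove. Otherwise, let $\tau\in G$ be a generator. Its image $\bar\tau\in\Aut(A)$ is nontrivial (otherwise $\tau$ would be a lift of $\mathrm{id}_A$ and would belong to $\langle\sigma\rangle$), and it preserves the branch divisor $B$ of the double cover $S\to A$. Because $B$ has a unique singular point, $\bar\tau$ fixes it, and after a translation on $A$ one may assume $\bar\tau$ is a group automorphism, so that its action on $H^0(A,\Omega^1_A)$ is the transpose of its differential at the origin. Thus the problem reduces to verifying that this differential admits the eigenvalue $1$: concretely, $\bar\tau\neq[-1]$ in the case $G\cong\ZZ/2$, and the spectrum of $\bar\tau$ on the tangent space is not of the form $\{-1,\pm i\}$ or $\{\pm i,\pm i\}$ in the case $G\cong\ZZ/4$.

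The main obstacle is precisely this eigenvalue verification. I expect to dispatch it by combining the explicit description of $A$ and of its group of automorphisms provided by Proposition~\ref{prop:AutA} with the concrete form of the generators of $G$ coming out of the proof of Theorem~\ref{theo_main}: together these data pin down the linear part of $\bar\tau$ on the tangent space at the origin and make it manifest that the eigenvalue $1$ appears in every case, so that some pullback of a $1$-form from $A$ survives as an $H$-invariant form on $S$.
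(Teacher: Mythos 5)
Your approach is essentially the same as the paper's: both reduce to showing that some holomorphic $1$-form on $S$ (coming from $A$) is invariant under all of $\Aut(S)$, and both rely on the fact from Proposition~\ref{prop:AutA} that $\varphi_{22}=1$ always, i.e., the generator of $G$ acts as $(x,y)\mapsto(kx,y)$ with $k\in\{-1,i\}$ and therefore fixes $dy$. Your final ``eigenvalue verification'' step is not really an open gap, since Proposition~\ref{prop:AutA} already gives the explicit form of the generator and makes the eigenvalue $1$ manifest; you are also somewhat more careful than the paper's write-up about subgroups $H$ that are not of the split form $H_1\times H_2$, since you correctly factor the action through the projection to $G$ rather than intersecting $H$ with the factors.
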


This corollary tells us that in order to prove the Mumford-Tate conjecture for these surfaces a new strategy is needed. 

Now, let us explain  the way in which this paper is organized.

In the second section we recall the construction of the surfaces $S$ with the calculation of the invariants.  The third section is devoted to the proof of the Main Theorem. The section four contains the  calculation of the invariants of the quotient surfaces. Finally we include a section five where it is explained the strategy  to prove the conjecture for surfaces with $p_g=q=2$ and of maximal Albanese dimension.

\textbf{Acknowledgments.} 
The first author was partially supported by GNSAGA-INdAM, by PRIN 2020KKWT53 003 - Progetto: \emph{Curves, Ricci flat Varieties and their Interactions} and by the DIMA - Dipartimento di Eccellenza 2023-2027. 
The second author was partially supported by supported by the "National Group for Algebraic and Geometric Structures, and their Applications" (GNSAGA - INdAM) and by the European Union under NextGenerationEU, PRIN 2022 Prot. n. 20223B5S8L.

\medskip

\textbf{Notation and conventions.} We work over the field
$\mathbb{C}$ of complex numbers. 
By \emph{surface} we mean a projective, non-singular surface $S$,
and for such a surface $K_S$ denotes the canonical
class, $p_g(S)=h^0(S, \, K_S)$ is the \emph{geometric genus},
$q(S)=h^1(S, \, K_S)$ is the \emph{irregularity} and
$\chi(\mathcal{O}_S)=1-q(S)+p_g(S)$ is the \emph{Euler-Poincar\'e
characteristic}.
\section{The surfaces}\label{Sec_theConstruction}

In this section we report, for the convenience of the reader, the construction of the families of 
surfaces under consideration. We use the notation of \cite{MatteoRoberto}, the main result in this direction is the following one.

\begin{prop}{\cite{PP}}\label{prop_invariantiRito}
Let $A$ be an Abelian surface.
Assume that $A$ contains a reduced curve whose class is 2-divisible in $\Pic(A)$, whose self intersection is $16$, with a unique singular point of type $(3,3)$ and no other singularity. Then there exists a generically finite double cover $S \rightarrow A$ branched along this curve. Moreover,  the numerical invariants of $S$ are  $p_g(S)=q(S)=2$ and $K_S^2=7$.
\end{prop}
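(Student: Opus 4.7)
The strategy I would follow is to resolve the singularity of the branch curve $B$ by blow-ups on $A$, construct a smooth double cover of the resulting surface, and then pass to the minimal model. First I would blow up $A$ twice to resolve the $(3,3)$-point $p$ of $B$: let $\sigma_1\colon A_1\to A$ be the blow-up at $p$, with exceptional divisor $E_1$, and let $\sigma_2\colon A_2\to A_1$ be the blow-up at the infinitely near triple point $q\in E_1$, with exceptional divisor $E_2$. Set $\pi=\sigma_1\circ\sigma_2$ and let $E_1'$ be the strict transform of $E_1$ on $A_2$. By definition of a $(3,3)$-point, the strict transform $B_2$ of $B$ on $A_2$ is smooth, and a direct computation gives $\pi^*B=B_2+3E_1'+6E_2$. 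Writing $B\sim 2L$ in $\Pic(A)$, the divisor $D:=B_2+E_1'$ then satisfies $D\sim 2(\pi^*L-E_1'-3E_2)$ and is therefore $2$-divisible in $\Pic(A_2)$. Moreover $B_2\cdot E_1'=\pi^*B\cdot E_1'-3(E_1')^2-6E_1'\cdot E_2=-3(-2)-6(1)+0=0$, so the two components of $D$ are disjoint and $D$ is smooth; the associated double cover $f\colon \widetilde S\to A_2$ is then itself smooth.

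Next I would compute the invariants of $\widetilde S$. Setting $K_{A_2}=E_1'+2E_2$ and $\mathcal M:=\pi^*L-E_1'-3E_2$, the standard formulas for a smooth double cover give $K_{\widetilde S}^2=2(K_{A_2}+\mathcal M)^2$ and $\chi(\mathcal O_{\widetilde S})=2\chi(\mathcal O_{A_2})+\tfrac12\mathcal M\cdot(\mathcal M+K_{A_2})$. Using $L^2=B^2/4=4$ together with the intersection numbers $(E_1')^2=-2$, $E_2^2=-1$, $E_1'\cdot E_2=1$, I obtain $K_{\widetilde S}^2=6$ and $\chi(\mathcal O_{\widetilde S})=1$. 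The crucial observation is that $E_1'$ lies in the branch locus, so its scheme-theoretic preimage splits as $f^*E_1'=2\widetilde E_1'$ with $\widetilde E_1'\subset \widetilde S$ a smooth rational curve satisfying $(\widetilde E_1')^2=\tfrac12(E_1')^2=-1$; contracting this $(-1)$-curve produces a smooth surface $S$ with $K_S^2=7$. Since $\widetilde E_1'$ is collapsed to the point $p\in A$ by $\pi\circ f$, this map factors through the contraction and yields the desired generically finite double cover $S\to A$.

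It remains to prove $p_g(S)=q(S)=2$; as $\chi(\mathcal O_S)=\chi(\mathcal O_{\widetilde S})=1$ and $q$ is a birational invariant, it suffices to show $q(\widetilde S)=2$. From $f_*\mathcal O_{\widetilde S}=\mathcal O_{A_2}\oplus\mathcal O_{A_2}(-\mathcal M)$ one gets $h^1(\mathcal O_{\widetilde S})=h^1(\mathcal O_{A_2})+h^1(\mathcal O_{A_2}(-\mathcal M))=2+h^1(\mathcal O_{A_2}(-\mathcal M))$, reducing matters to the vanishing of the second term. The projection formula for $\pi$ gives $R^i\pi_*\mathcal O_{A_2}(-\mathcal M)=\mathcal O_A(-L)\otimes R^i\pi_*\mathcal O_{A_2}(E_1'+3E_2)$; since $E_1'+3E_2$ is supported on the exceptional locus of $\pi$, one has $\pi_*\mathcal O_{A_2}(E_1'+3E_2)=\mathcal O_A$ and the higher direct images vanish. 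Leray then yields $h^1(A_2,\mathcal O_{A_2}(-\mathcal M))=h^1(A,\mathcal O_A(-L))$, and since $L$ is ample on $A$ (being half of the effective class $B$ with $L^2=4>0$), Kodaira vanishing on the abelian surface $A$ produces the required $h^1(A,\mathcal O_A(-L))=0$.

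The main obstacle I anticipate is the bookkeeping around the resolution of the $(3,3)$-point: one has to pick exactly the right linear combination of exceptional divisors to add to $\pi^*B$ so that the resulting branch divisor is simultaneously $2$-divisible, smooth, and produces a double cover whose only $(-1)$-curve is the preimage of one of those exceptional divisors. Once this combinatorial step is calibrated correctly, the remainder of the argument is a routine intersection-theoretic computation combined with standard vanishing on abelian surfaces.
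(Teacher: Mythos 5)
Your overall approach matches the paper's construction exactly: blow up $A$ twice to resolve the $[3,3]$-point, take the smooth double cover $\widetilde S \to A_2$ branched over the strict transform $B_2$ of $B$ together with the $(-2)$-curve $E_1'$, and contract the resulting $(-1)$-curve $\widetilde E_1'$. Your identification of the branch divisor $D=B_2+E_1'\sim 2\mathcal M$ with $\mathcal M=\pi^*L-E_1'-3E_2$, the verification that $B_2$ and $E_1'$ are disjoint, and the intersection-theoretic computations giving $K_S^2=7$ and $\chi(\OO_S)=1$ are all correct; the paper packages the same computation through the $[3,3]$-point formulas of BHPV, so up to here the two arguments coincide in substance.

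There is, however, a genuine gap in your step establishing $q(\widetilde S)=2$. You assert that since $E_1'+3E_2$ is supported on the exceptional locus, the higher direct images $R^i\pi_*\OO_{A_2}(E_1'+3E_2)$ vanish for $i\geq 1$; this is false. Writing $E_1'+3E_2=\sigma_2^*E_1+2E_2$ and pushing forward along $\sigma_2$ first, the exact sequence $0\to\OO(E_2)\to\OO(2E_2)\to\OO_{E_2}(2E_2)\cong\OO_{\PP^1}(-2)\to 0$ shows that $R^1(\sigma_2)_*\OO_{A_2}(2E_2)$ is a length-one skyscraper at $q$ (coming from $H^1(\PP^1,\OO_{\PP^1}(-2))\cong\CC$), and this propagates to $R^1\pi_*\OO_{A_2}(E_1'+3E_2)$ being a length-one skyscraper at $p$. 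Vanishing of higher direct images of $\OO(Z)$ for $Z$ a \emph{positive} exceptional cycle does not hold in general (it would for $\OO(-Z)$), and $E_1'+3E_2$ is precisely large enough to break it. Consequently the Leray spectral sequence only yields an exact sequence
\[
0\longrightarrow H^1\bigl(A,\OO_A(-L)\bigr)\longrightarrow H^1\bigl(A_2,\OO_{A_2}(-\mathcal M)\bigr)\longrightarrow\CC\longrightarrow H^2\bigl(A,\OO_A(-L)\bigr),
\]
so $h^1(A_2,\OO_{A_2}(-\mathcal M))\in\{0,1\}$, and the Kodaira vanishing you invoke on $A$ alone cannot decide between the two. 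The value is indeed $0$, but it requires an extra input. One way to close the gap: by Serre duality $h^2(A_2,\OO_{A_2}(-\mathcal M))=h^0(A_2,\pi^*L-E_2)$, which equals the dimension of $\{\,s\in H^0(A,L): s(p)=0\,\}$; this equals $1$ rather than $2$ exactly because $p$ is \emph{not} a base point of the pencil $|L|$, a fact that must be checked using the explicit description of the $(1,2)$-polarization (cf.\ Remark~\ref{rem:dueriducibili} and the choices of $\bar a,\bar b$ in Remark~\ref{rem_the3cases}); then $h^1=h^2-\chi(\OO_{A_2}(-\mathcal M))=1-1=0$. Alternatively, and this is what the paper does, one imports from \cite{rito,MatteoRoberto} the fact that $\alpha\colon S\to A$ is the Albanese morphism, which gives $q(S)=\dim A=2$ directly. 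As written, your derivation of $q=2$ is not justified.
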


In \cite{rito} and later in \cite{MatteoRoberto} the existence of the abelian surface $A$, that has the properties as in Proposition \ref{prop_invariantiRito}, is proved. In particular, it is also shown that the double cover coincides with the Albanese map, hence $A$ is the Albanese variety associated to $S$, we denote it by
\[
\alpha \colon S \rightarrow \textrm{Alb}(S)=A.
\]
We can be more precise,  $A$  is isogenous to a product of two elliptic curves $T_1$ and $T_2$. We denote by 
\[
\iota \colon A \rightarrow T_1 \times T_2
\]
the isogeny, which is of degree 2. Clearly,  $A$ carries a (1,2)-polarization $L$ which  is a pull-back of a (product) principal polarization via the isogeny $\iota$. In addition, on $A$ we have  two elliptic fibrations $f_j \colon A \rightarrow T_j$ with fibres  $\Lambda_i$ with $\Lambda_i$ isogenous to $T_i$ by a degree two isogeny for $i,j \in \{1,2\}$. Notice that the isogeny is given by the restriction of $\iota$ to the fibres. 

The branching locus of $\alpha$ is an effective divisor with two irreducible components
\begin{equation}\label{eq_CtL}
C_1+t \in |2L|,
\end{equation}
where $C_1=f_2^{-1}(b_1)$ is an element of $|\Lambda_2|$, with $b_1 \in T_2$. While, $t$ is a curve of geometric genus $3$ with a tacnode tangent  to $C_1$ at a point  $p$. The situation is exemplified in the following  Figure \ref{Fig1}.

\begin{figure}[h]
\centering
\includegraphics[width=0.55\linewidth]{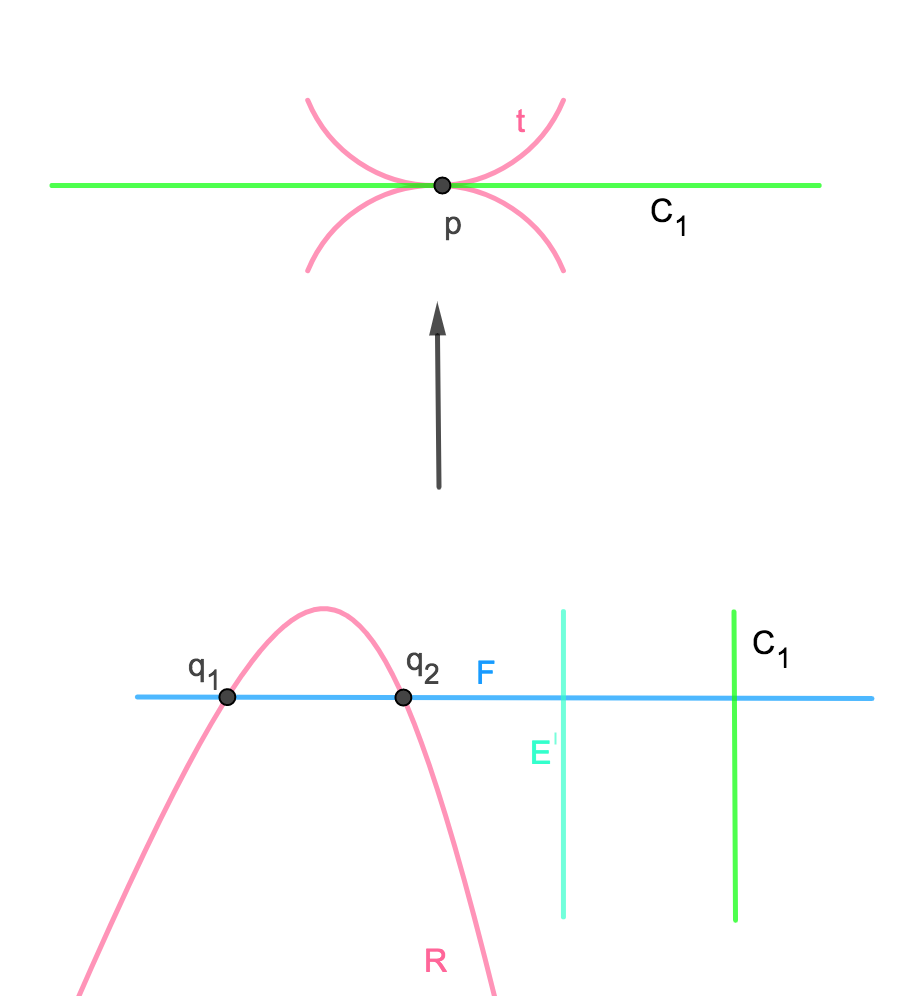}
\caption{The Branch Divisor of $\alpha$ and $\beta$}\label{Fig1}
\end{figure}

We deduce that
\begin{align}\label{eq_intersecions}
C_1^2=&0,&
t^2=&8,&
C_1t=&4,
\end{align}
notice that $(C_1+t)^2=16$. 

Notice that the branch locus is singular in $p$. Therefore, to get a smooth surface $S$ as a generically finite double cover of $A$ branched along $C_1+t$  we have to blow up the point $p$ (see Figure \ref{Fig1}) first. 

\begin{enumerate}
\item First, we resolve the singularity in $p$. To do that, we need to blow up $A$ twice, first  in $p$ and then in a point infinitely near to $p$. Let us denote these two blow ups by 
\[ B' \stackrel{\sigma_4}{\longrightarrow} B \stackrel{\sigma_3}{\longrightarrow} A.
\]
On $B'$, let us denote by $F$ the exceptional divisor relative to $\sigma_4$, by $E'$ the strict transform of the exceptional divisor $E$ relative to $\sigma_3$, by $C_1$ the strict transform of $C_1$ and, finally, by $R$ the strict transform of $t$ (see \verb|Figure| \ref{Fig1}). 

In addition, one gathers  the following information:
$E'\cong \PP^1$ and $(E')^2=-2$, $F\cong \PP^1$ and $F^2=-1$,  $g(C_1)=1$  and $C_1^2=-2$. 

\item Second, we consider a double cover of $\beta\colon S'\longrightarrow B'$ ramified over $R+C_1+E'$ (this is even since $t+C_1$ is even on $A$). The surface $S'$ is a surface of general type, not minimal. Indeed, it contains a $-1$-curve, which is $\hat{E}=\beta^{-1}(E')$. The ramification divisor is denoted $\hat{R}+\hat{C_1}+\hat{E}$. Notice that $\hat{C_1}$ has genus $1$ and $\hat{C_1}^2=-1$.

\item Finally, to get $S$ we contract the $-1$-curve $\hat{E}$.
\end{enumerate}

We can summarize the construction of $S$ with the following diagram.
  \[
\begin{xy}
\xymatrix{
S'   \ar@{->}[d]    \ar@{->}[r]^{\beta}   &  B' \ar@{->}[d]^{\sigma_4} \\
S   \ar@{->}[dr]_{\alpha}  &  B \ar@{->}[d]^{\sigma_3} \\ 
& A 
 }
\end{xy}
\]

Moreover, the point $p$ is a [3,3] point, which is not a negligible singularities. A [3,3] point is a pair  $(x_1, \, x_2)$ such that $x_1$ belongs to the first infinitesimal neighborhood of $x_2$ and both are triple points for the curve. 
Thus, we may calculate the invariants of $S$  by using the formulae in \cite[p. 237]{BHPV03}. In those formulas $x_2$ counts as a triple point (so $m_2=1$) and $x_1$ as a quadruple point (so $m_1=2$). Then 
\begin{equation} \label{eq.res.can}
2=2 \chi(\oo_{S'})=L^2- \sum_{i=1}^2 m_i(m_i-1), \quad
6=K_{S'}^2=2L^2-2 \sum_{i=1}^2  (m_i-1)^2.
\end{equation}

Finally, once we contract the $-1$-curve on $S'$, we obtain
hence $K_S^2=7$ and $\chi(S)=1$.


\medskip

Considering the Abelian varieties $A, T_1, T_2, T_1 \times T_2$ we choose the following points as neutral elements:
\begin{align*}
p&\in A,&
a_3:=&f_1(p)\in T_1,&
b_1:=&f_2(p)\in T_2,&
(a_3,b_1) &\in T_1 \times T_2.
\end{align*}
With this particular choice $\iota, f_1,f_2$ are  homomorphism of groups too.

The remaining  $2$-torsion points on each  elliptic curve $T_j$ will be denoted by
\begin{align*}
a_1,a_2,a_4 &\in T_1[2],&
b_2,b_3,b_4 &\in T_2[2].
\end{align*}
This yields  $\iota_* \OO_A^- \cong \OO_{T_1}(a_4-a_3) \boxtimes \OO_{T_2}(b_2-b_1) $ , where $\iota_* \OO_A^- $ is the anti-invariant part of $\iota_* \OO_A$,  see \cite[Lemma 3.4]{MatteoRoberto} for a detailed proof.

\begin{rem}\label{rem_the3cases}
Furthermore in \cite{MatteoRoberto} it is proved that
\[ f_1^*(a_4+a_3) + f_2^*(b_3+b_1) \in |2L|,
\]
whence
\[L \cong f_1^*(\bar{a})  \otimes f_2^*(\bar{b})
\]
where $\bar{b}$ is a 4-torsion point such that $\bar{b} \oplus \bar{b} \neq b_2$.  While for $\bar{a}$ we have three possible choices by \cite[Proposition 3.6]{MatteoRoberto} 

\begin{enumerate}
\item $\bar{a}=a_3$ (in this case $\bar{b} \oplus \bar{b}=b_4$);
\item $\bar{a}$  is a 2-torsion point such that $\bar{a} \neq a_4$ (in this case $\bar{b} \oplus \bar{b}=b_4$);
\item $\bar{a}$ is a 4-torsion point such that $\bar{a} \oplus \bar{a} = a_4$ (in this case $\bar{b} \oplus \bar{b}=b_3$).
\end{enumerate}

As just remarked all the choices are possible and to each choice corresponds a different irreducible component of the Gieseker moduli space $\mathcal{M}^{\textrm{can}}_{2,2,7}$ of the canonical models of the surfaces of  general type with $p_g = q = 2$ and $K^2 = 7$. We shall denote these components, following \cite[Definition 3.7]{MatteoRoberto}, by $\mathcal{M}_i \subset \mathcal{M}^{\textrm{can}}_{2,2,7}$ with $i \in \{1,2,4\}$ respectively. Note that the index $i$ equals the order of $\bar{a}$ as torsion point.
\end{rem}

\section{The automorphisms of the Rito's surfaces}\label{sec_klein} 

Consider the abelian variety $A$. We know that there is an isogeny of degree $2$ onto a product of elliptic curves $T_1 \times T_2$.

By taking the universal covers, we can write $T_j:=\CC/\lambda_j$ where the $\lambda_j \cong \ZZ^2$ are lattices so that the origin maps to $a_3$  respectively $b_1$. 
We choose generators $\bar{e}_1,\bar{e}_2$ of $\lambda_1$, $\bar{e}_3,\bar{e}_4$ of $\lambda_2$ so that $\frac{\bar{e}_1}2$ maps to $a_1$,  $\frac{\bar{e}_2}2$ maps to $a_2$,  $\frac{\bar{e}_3}2$ maps to $b_3$,  $\frac{\bar{e}_4}2$ maps to $b_4$.

So, in $\CC^2$ -coordinates we have
\begin{align*}
e_1=&(\bar{e}_1,0),&
e_2=&(\bar{e}_2,0),&
e_3=&(0,\bar{e}_3),&
e_4=&(0,\bar{e}_4).&
\end{align*}
Since the universal cover of $T_1 \times T_2$ factors through the isogeny $\iota$,
we obtain $A=\CC^2/\lambda$ where $\lambda$ is a sublattice  of index $2$ of the lattice
\[
\lambda_1 \oplus \lambda_2 =\left\{ \sum t_i e_i | t_i \in \ZZ \right\}
\] 

\begin{lem}\label{lem:sublattice}
The lattice $\lambda$ is the sublattice of $\lambda_1 \oplus \lambda_2$ of the elements whose sum $\sum t_i$ is even.
\end{lem}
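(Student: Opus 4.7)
The approach is to identify $\lambda$ as the kernel of the $\ZZ/2$-valued character on $\lambda_1\oplus\lambda_2$ classifying the \'etale double cover $\iota$. Since $\iota\colon A\to T_1\times T_2$ is \'etale of degree $2$, it is determined up to isomorphism by the nontrivial $2$-torsion line bundle $M:=\iota_*\OO_A^{-}$, and we already know that $M\cong\OO_{T_1}(a_4-a_3)\boxtimes\OO_{T_2}(b_2-b_1)$ by \cite[Lemma 3.4]{MatteoRoberto}, recalled just before the statement. Under the canonical identifications
\[
\Pic(T_1\times T_2)[2]\;\cong\;H^1(T_1\times T_2,\ZZ/2)\;\cong\;\mathrm{Hom}(\lambda_1\oplus\lambda_2,\ZZ/2),
\]
the line bundle $M$ corresponds to a nontrivial character $\chi_M$, and by the very construction $A\cong\mathrm{Spec}(\OO\oplus M)$ of the \'etale cover, one has $\lambda=\ker\chi_M$. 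The problem thus reduces to computing $\chi_M$ on the generators $e_1,e_2,e_3,e_4$.

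The key tool is the classical link between $2$-torsion line bundles and characters on a principally polarized elliptic curve $E=\CC/\Lambda$: if $p=v/2\in E[2]$ for some $v\in\Lambda$, then the character associated to $\OO_E(p-0)$ is $w\mapsto\langle v,w\rangle\bmod 2$, where $\langle\cdot,\cdot\rangle$ is the unimodular symplectic form on $\Lambda$ induced by the principal polarization. I would verify this by a short direct check on a model elliptic curve: for each of the three nonzero classes $v\bmod 2\Lambda$, write down the index-$2$ sublattice $\Lambda'\subset\Lambda$ predicted by the formula, and verify via Abel--Jacobi that the pullback of $\OO_E(p-0)$ to $\CC/\Lambda'$ becomes trivial, which amounts to a one-line congruence check on points.

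With the formula in hand, the computation of $\chi_M$ is immediate. The product principal polarization on $T_1\times T_2$ induces on $\lambda_1\oplus\lambda_2$ the orthogonal sum of the two polarizations, i.e.\ the symplectic form with $\langle e_1,e_2\rangle=\langle e_3,e_4\rangle=1$ and all other pairings of basis vectors zero; and $M$ corresponds via Abel--Jacobi to the $2$-torsion point $(a_4,b_2)=(e_1+e_2+e_3+e_4)/2$. Therefore
\[
\chi_M(e_i)\;=\;\langle e_1+e_2+e_3+e_4,\,e_i\rangle\;\equiv\;1\pmod 2
\]
for every $i$, so $\chi_M\bigl(\sum t_ie_i\bigr)=\sum t_i\bmod 2$, and $\lambda=\ker\chi_M$ is the sublattice of elements with even coordinate sum, as claimed. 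I expect the main obstacle to be purely bookkeeping: carefully tracking the chain of canonical identifications (\'etale covers, $2$-torsion line bundles, characters, $2$-torsion points via the Weil pairing) rather than any individual computation, which is elementary once the framework is set up.
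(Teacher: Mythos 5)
Your proposal is correct, and it takes a genuinely different route from the paper. You package the whole argument in the standard correspondence between $2$-torsion line bundles, characters of $\pi_1$, and the Weil pairing: identify $\lambda$ as $\ker\chi_M$, then compute $\chi_M$ by the formula $\chi_M(w)=\langle v,w\rangle\bmod 2$ with $v=e_1+e_2+e_3+e_4$, and read off that the kernel is the even-coordinate-sum sublattice. The paper instead names the candidate lattice $\tilde{\lambda}$ explicitly, constructs the auxiliary covers $T'_i\to T_i$ (with the sublattices $\lambda'_i$ that are exactly the kernels of your two factor characters), verifies by a direct Abel--Jacobi check that $\OO_{T_i}(\cdot)$ pulls back trivially to $T'_i$, and then closes by an exclusion argument: among the three nontrivial $2$-torsion bundles killed on $T'_1\times T'_2$, the two ``mixed'' ones $\OO_{T_1}(a_4-a_3)\boxtimes\OO_{T_2}$ and $\OO_{T_1}\boxtimes\OO_{T_2}(b_2-b_1)$ cannot classify $\tilde A\to T_1\times T_2$ because triviality of either pullback would force $\tilde A$ to split as $T'_1\times T_2$ or $T_1\times T'_2$, which it does not. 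At bottom both arguments rely on the same one-line Abel--Jacobi congruence on an elliptic curve (indeed, the sublattices $\lambda'_i$ the paper computes are exactly the kernels your Weil-pairing formula predicts, so each verifies the other); what your version buys is that the symplectic-form formalism makes $\chi_M$ directly computable on generators, eliminating the need for the auxiliary diagram and the final exclusion step. The only thing to be careful about, as you note, is that the chain of identifications (\'etale double covers $\leftrightarrow$ $2$-torsion bundles $\leftrightarrow$ $\Hom(\pi_1,\ZZ/2)$, and the principal polarization's identification of $E[2]$ with $\widehat{E}[2]$) is consistent with the Weil-pairing formula you quote; your proposed model check takes care of that.
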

\begin{proof}
We set $\tilde{\lambda}:= \left\{ \sum m_j e_j | \sum m_j \text { is even}\right\}$, $\tilde{A}:= \CC^2/\tilde{\lambda}$. The inclusion $\tilde{\lambda} \subset \lambda_1 \oplus \lambda_2$ induces an isogeny $\tilde{\iota} \colon \tilde{A} \rightarrow T_1 \times T_2$ of degree $2$.

An isogeny of degree $2$ is determined by the anti-invariant part of the direct image of the trivial bundle, that is a generator of the kernel of the pull-back map among the Picard groups. So we only need to prove that $\tilde{\iota}^* \left( \OO_{T_1}(a_4-a_3) \boxtimes \OO_{T_2}(b_2-b_1) \right)$ is the trivial sheaf of $\tilde{A}$.

I

Let $\lambda'_1 \subset \lambda_1$ be the index $2$ sublattice of the elements of the form $m_1\bar{e}_1+ m_2 \bar{e}_2$ with $m_1+m_2$ even. In the same way, let $\lambda'_2 \subset \lambda_2$ be the index $2$ sublattice of the elements of the form $m_3\bar{e}_3+ m_4 \bar{e}_4$ with $m_3+m_4$ even.  These define an isogeny of degree $2$, $\tilde{\iota}_i \colon T'_i= \CC/\lambda'_i \rightarrow T_i$, for $i=1,2$. We can derive the following  commutative diagram  
\begin{equation}\label{diag:isogenies}
\xymatrix{ & T'_1 \times T'_2 \ar[dr] \ar[dl] \ar[d] \\
T_1' \ar[dd]_{\tilde{\iota}_1 }& \tilde{A}  \ar[d]^{\tilde{\iota}} &  T'_2 \ar[dd]^{\tilde{\iota}_2} \\ 
& T_1 \times T_2  \ar[dr] \ar[dl]& \\ 
T_1 && T_2
}
\end{equation}


Notice that  $\tilde{\iota}_2^* \OO_{T_2}(b_2-b_1)$ is trivial  on $ \CC/\lambda'_2$. This is standard; it can be show for example as follows.

The point $b_1$ pulls back to the sum of two points, the classes modulo $\lambda'_2$ of $0$ and $\bar{e}_3$. The point $b_2$ pulls back to the sum of the classes of $\frac12 (\bar{e}_3+\bar{e}_4)$ and  $\bar{e}_3+\frac12 (\bar{e}_3+\bar{e}_4)$. Since 
\[
\left(\frac12 (\bar{e}_3+\bar{e}_4) + \bar{e}_3+ \frac12 (\bar{e}_3+\bar{e}_4) \right) - (0+\bar{e}_3) =\bar{e}_3+\bar{e}_4\in \lambda'_2
\]
the two divisors of degree $2$ we have obtained are linearly equivalent.

In the same way we can prove that   $\tilde{\iota}_1^* \OO_{T_1}(a_4-a_3)$ is trivial on $ \CC/\lambda'_1$.

The 2-torsion line bundles on  $T_1 \times T_2$ that pull back to trivial bundles on $T'_1 \times T'_2$ are the  line bundles: $\OO_{T_1}(a_4-a_3) \boxtimes \OO_{T_2}$, $\OO_{T_1} \boxtimes  \OO_{T_2}(b_2-b_1)$  and   $\OO_{T_1}(a_4-a_3) \boxtimes \OO_{T_2}(b_2-b_1)$. Exactly one of them pulls back to the trivial line bundle to $\tilde{A}$.

We can conclude the proof observing that if $\tilde{\iota}^* \left( \OO_{T_1}(a_4-a_3) \boxtimes \OO_{T_2} \right)$ were trivial on $\tilde{A}$ than this would imply that there were a fibration  form $\tilde{A}$  onto $T'_1$  and  this is absurd.  In the same way we exclude the case $\OO_{T_1} \boxtimes  \OO_{T_2}(b_2-b_1)$.
\end{proof}

We will need the following general result for an abelian surface with a $(1,2)$-polarization, the proof of which can be found in \cite[Section 1.2]{barth}.
\begin{rem}\label{rem:dueriducibili}
The linear system $|L|$ contains exactly two reducible divisors union of elements respectively of $\Lambda_1$ and $\Lambda_2$, the curves $f_1^*\bar{a}+f_2^* \bar{b}$ and $f_1^*(\bar{a} \oplus a_4)+f_2^* (\bar{b} \oplus b_2)$. 
\end{rem}

Since the Albanese morphism $\alpha\colon S \longrightarrow A$ has degree $2$, it determines an involution $\sigma \colon S \rightarrow S$ that is central in $\Aut\ S$ and an exact sequence
\[
0
\rightarrow
\ZZ/2\ZZ=\langle \sigma \rangle
\rightarrow
\Aut \ S
\rightarrow
G
\rightarrow
0
\]  
where $G$ is the group of the self-biholomorphisms $\varphi \colon A \rightarrow A$ such that
\begin{enumerate}
\item $\varphi^*L=L$;
\item $\varphi (C_1+t)=C_1+t$, equivalently $\varphi (C_1)=C_1$, $\varphi (t)=t$, $\varphi(p)=p$.
\end{enumerate}

We have written an isomorphism $A\cong \CC^2/\lambda$ where the point $p$ is the image of the origin of $\CC^2$.
From $\varphi(p)=p$ it follows that the elements of $G$ are automorphisms of $A$ as a group with the group structure induced by $\CC^2$.
Now, we can see that one of the above assumption is not necessary, indeed we have the following lemma.

\begin{lem}
The group $G$ is the group of  the automorphisms of the Abelian variety $A$ preserving the group structure induced by the identification $\CC^2/\lambda =A$ such that
 $\varphi^*L=L$ and  $\varphi (C_1)=C_1$.
\end{lem}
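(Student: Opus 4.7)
The inclusion ``$\subseteq$'' is immediate: any $\varphi \in G$ satisfies $\varphi(p) = p$ (so it is a group automorphism of $A$), preserves $L$ by definition, and must preserve $C_1$ separately, because $C_1$ and $t$ are irreducible components of $C_1 + t$ of different numerical classes ($C_1^2 = 0$ while $t^2 = 8$), so $\varphi(C_1 + t) = C_1 + t$ cannot swap them. For the reverse inclusion, fix a group automorphism $\varphi$ of $A$ with $\varphi^* L \cong L$ and $\varphi(C_1) = C_1$; one must show that $\varphi(t) = t$.

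Since $C_1 + t \in |2L|$, applying $\varphi^{-1}$ together with $\varphi^* L \cong L$ and $\varphi(C_1) = C_1$ gives $\varphi^{-1}(t) \sim t$, so $\varphi^{-1}(t) \in |2L - C_1|$. Moreover $\varphi$ is a group automorphism, so it fixes $p = 0$ and its differential at $p$ preserves the tangent line $T_p C_1$ (as $\varphi(C_1) = C_1$); hence the curve $\varphi^{-1}(t)$ inherits from $t$ the structure of an irreducible curve of geometric genus $3$ with a tacnode at $p$ tangent to $C_1$, so that $C_1 + \varphi^{-1}(t)$ carries a $[3,3]$-point at $p$ of exactly the same type as $C_1 + t$.

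The proof then reduces to the uniqueness statement: $t$ is the only element of $|2L - C_1|$ forming a $[3,3]$-singular divisor with $C_1$ at $p$. This is most easily verified on the blow-up $\sigma \colon B' \to A$ resolving the singularity. The strict transform of any such curve lies in the linear system $|\sigma^*(2L - C_1) - 2 E' - 4 F|$, which contains $R$, the strict transform of $t$. An intersection-theoretic computation yields $R^2 = 0$ and $R \cdot K_{B'} = 4$, consistently with $R$ being smooth of genus $3$, and from the short exact sequence
$$ 0 \rightarrow \OO_{B'} \rightarrow \OO_{B'}(R) \rightarrow \OO_R(R|_R) \rightarrow 0 $$
one concludes $h^0(\OO_{B'}(R)) = 1$ as soon as the degree-$0$ line bundle $\OO_R(R|_R)$ is non-trivial; pulling the unique element back to $A$ gives $\varphi^{-1}(t) = t$ as required.

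\textbf{Main obstacle.} The technical core is the non-triviality of $\OO_R(R|_R)$, i.e.\ the fact that $R$ does not move in a pencil on $B'$. Were it to move, the general fibre would yield another curve in $|2L - C_1|$ with a $[3,3]$-singularity at $p$ tangent to $C_1$, contradicting the rigidity of the branch-divisor of Rito's construction. The step can be carried out either by a direct Riemann--Roch computation on $B'$, or by invoking the uniqueness of the branch locus once $(A, L, C_1)$ are fixed, as established in \cite{rito,MatteoRoberto}.
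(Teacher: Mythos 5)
Your intersection computations on $B'$ are correct ($R \in |\sigma^*(2L-C_1) - 2E' - 4F|$, $R^2 = 0$, $R \cdot K_{B'} = 4$), and the reduction to a uniqueness statement is a legitimate strategy, but it differs substantially from the paper's argument. The paper argues by contradiction: if $\varphi(t) \neq t$, then since both curves cut the divisor $4p$ on $C_1$, the pencil they span inside $H^0(A, \OO_A(t))$ contains a section vanishing on $C_1$, whose residual divisor $D_1 \in |2L - 2C_1|$ is numerically $2\Lambda_1$ and hence a union of fibres of $f_1$. Restricting to another fibre $D$ of $f_1$ and using the isogeny $\tilde{\iota}_2 \colon D \to T_2$, one finds that $L|_D$ and $C_1|_D$ differ by $2$-torsion, forcing $\bar{b} \oplus \bar{b} = b_2$, which contradicts Remark~\ref{rem_the3cases}. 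The paper's route thus only needs the weak tangency information "$t|_{C_1} = 4p$" and exploits the specific arithmetic of the $(1,2)$-polarization.

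Your proof, by contrast, hinges on showing $h^0(\OO_{B'}(R)) = 1$, and this is a genuine gap, not a routine verification. Riemann--Roch on $B'$ gives $\chi(\OO_{B'}(R)) = -2$ and $h^2 = 0$ (by Serre duality, since $(K_{B'}-R)\cdot\sigma^*L < 0$), so one obtains $h^0 = h^1 - 2$ with no upper bound at all on $h^0$; the "direct Riemann--Roch computation" you propose therefore does not close the gap. The alternative of "invoking the uniqueness of the branch locus once $(A,L,C_1)$ are fixed, as established in \cite{rito,MatteoRoberto}" is not available either: those references construct \emph{one} such curve $t$ but do not prove it is the unique element of $|2L-C_1|$ with a $[2,2]$-singularity at $p$ tangent to $C_1$, and in fact that uniqueness is essentially equivalent to the content of the present lemma, so invoking it would be circular. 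To complete your route you would still have to rule out $R$ moving in a pencil on $B'$, and the only proof the paper offers of that rigidity is precisely the $2$-torsion argument you have bypassed.
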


\begin{proof}

The only nontrivial thing to prove is that, if  $\varphi \colon A \rightarrow A$  is a  group automorphism such that
 $\varphi^*L=L$ and  $\varphi (C_1)=C_1$, then  $\varphi(t)=t$.  Notice that by hypothesis and equation \eqref{eq_CtL} we have that $t$ is linearly equivalent to $\varphi(t)$.

By \eqref{eq_intersecions}, the intersection number between $t$ and $C_1$ is $4$. More precisely, $t$ cuts on $C_1$ the divisor $4p$. Since $\varphi(p)=p$ and $\varphi(C_1)=C_1$, $\varphi(t)$ cuts $4p$ on $C_1$ as well. 

Assume by contradiction $\varphi(t)\neq t$, then the functions defining them span a subspace $V \subset H^0(A,t)$ of dimension $2$. By what we just said $t$ and  $\varphi(t)$ cut on $C_1$ the same divisor $4p$, therefore the restriction map $\rho\colon V \longrightarrow H^0(C_1, t|_{C_1})$ has rank $1$. By dimension count the kernel of $\rho$ is a one dimensional subspace generated by  say  $s$. Then $C_1 \subset \{s=0\}$, let us call $D_1$ the residue curve.  By definition $D_1$ is an effective divisor  in $|t-C_1|=|2L-2C_1|$.

Notice that $L$ is numerically equivalent to $\Lambda_1+\Lambda_2$ while  $C_1$ is numerically equivalent to $\Lambda_2$. Since $D_1$ is linearly equivalent to  $2L-2C_1$ if follows that it is numerically equivalent to $2\Lambda_1$, hence the intersection product $D_1 \cdot \Lambda_1 =0$. Since any element in $|\Lambda_1|$ is irreducible we have that $D_1$ is union of two of such elements. Let us denote these two elements them by $A$ and $B$ and we get numerically $D_1=A+B$. 

Let $D$ be a further element of $|\Lambda_1|$. Then $D$ is isomorphic to $\CC/\lambda_2'$. More precisely,  we have the following diagram

\begin{equation}\label{diag:isogenies}
\xymatrix{
 \tilde{A} \ar[r]^{\tilde{\iota}}  & T_1 \times T_2 \ar[d] \\
 \CC/\lambda'_2 \ar[u]^{\xi}  \ar[r]^{\tilde{\iota}_2} & T_2
}
\end{equation}
where $\xi$ maps isomorphically $ \CC/\lambda'_2$ onto $D$. Notice that $\xi$, being an isomorphism,  allows us to see $D$ as a degree $2$ \'etale cover of $T_2$ via the composition with the isogeny $\tilde{\iota}_2$.  Since $A$ or $B$ restricted to $D$ are trivial so is the restriction of $D_1$. A fortiori  the restrictions to $D$ of $2L$ and $2C_1$ are linearly equivalent, which means that the restriction to $D$ of $L$ and $C_1$ differ by $2$-torsion. 

The restriction to $D$ of $C_1$ is $\tilde{\iota}_2^* b_1$ since $b_1$ is $f_2(p)$. Moreover the restriction to $D$ of $L$   is $\tilde{\iota}_2^*  \bar{b}$ by  Remark \ref{rem_the3cases}. Hence the $4$-torsion point $ b_1 - \bar{b}$ in $T_2$ lifts to a $2$-torsion point  in $D$. Thus  the line bundle $\OO_{T_2}(2(b_1 - \bar{b}))$ is the only $2$-torsion on $T_2$ that lifts to the trivial bundle on $D$. This implies (see also proof of Lemma \ref{lem:sublattice}) that $\bar{b} \oplus  \bar{b}=b_2$ but this is absurd because by  Remark \ref{rem_the3cases} we have either $\bar{b} \oplus  \bar{b}=b_3$ or $\bar{b} \oplus  \bar{b}=b_4$.
\end{proof}

The action of $G$ on $A$ may be uniquely lifted to an action on $\CC^2$ fixing the origin, so representing $G$ as a finite subgroup of the linear group $GL_2(\CC)$ of the matrices preserving the lattice $\lambda$. We will then write $\varphi$ as a matrix
\[
\varphi=
\begin{pmatrix}
\varphi_{11}&
\varphi_{12}\\
\varphi_{21}&
\varphi_{22}
\end{pmatrix}
\]

\begin{lem}\label{lem:diagonal}
$G$ is a group of diagonal matrices. In other words $\varphi_{12}=\varphi_{21}=0$.
\end{lem}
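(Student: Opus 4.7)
The plan is to establish the two vanishings $\varphi_{21}=0$ and $\varphi_{12}=0$ separately, using $\varphi(C_1)=C_1$ for the first and $\varphi^*L=L$ for the second.

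For $\varphi_{21}=0$, I would lift everything to the universal cover $\CC^2$. The preimage of $C_1=f_2^{-1}(b_1)$ under $\pi\colon\CC^2\to A$ is $\CC\times\lambda_2$, a union of horizontal lines at heights in $\lambda_2$. Since $\varphi$ lifts to a $\CC$-linear map fixing the origin and, by $\varphi(C_1)=C_1$, preserves $\pi^{-1}(C_1)$, the image $\varphi(\CC\times\{0\})$ is a line through the origin contained in $\CC\times\lambda_2$. Discreteness of $\lambda_2$ forces this line to be $\CC\times\{0\}$ itself, giving $\varphi_{21}=0$.

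For $\varphi_{12}=0$, I would first observe that $\varphi^*$ acts as the identity on the numerical classes $[\Lambda_1],[\Lambda_2]\in NS(A)$: from $\varphi(C_1)=C_1$ and $C_1\in|\Lambda_2|$ one reads $\varphi^*[\Lambda_2]=[\Lambda_2]$, and since $L\equiv\Lambda_1+\Lambda_2$ numerically and $\varphi^*[L]=[L]$, also $\varphi^*[\Lambda_1]=[\Lambda_1]$. In particular $\varphi$ cannot send a fiber of $f_1$ to a fiber of $f_2$, since their numerical classes differ.

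Next, I would apply Remark~\ref{rem:dueriducibili}: $|L|$ contains exactly two reducible divisors, $D_1=f_1^*\bar{a}+f_2^*\bar{b}$ and $D_2=f_1^*(\bar{a}\oplus a_4)+f_2^*(\bar{b}\oplus b_2)$. Since $\varphi^*L=L$, the map $\varphi$ acts on $|L|$ and permutes $\{D_1,D_2\}$; the observation above forces $\varphi$ to send the $f_1$-component of $D_1$ to the $f_1$-component of either $D_1$ or $D_2$, giving an equality $\varphi(\Lambda_1+x_0)=\Lambda_1+x_0'$ for suitable $x_0,x_0'$. Since $\varphi$ is a group homomorphism this rewrites as $\varphi(\Lambda_1)=\Lambda_1+(x_0'-\varphi(x_0))$; the left-hand side is a subgroup of $A$, hence the translation term lies in $\Lambda_1$ and $\varphi(\Lambda_1)=\Lambda_1$. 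Thus $\varphi$ preserves the vertical direction $\{0\}\times\CC\subset\CC^2$, equivalently $\varphi_{12}=0$. The hardest part is the bookkeeping in this step: coupling the action on the reducible divisors in $|L|$ with the invariance of the classes $[\Lambda_1]$ and $[\Lambda_2]$ in $NS(A)$ to exclude any swap between fibers of the two different elliptic fibrations. Once this is correctly set up, the translation to the matrix statement $\varphi_{12}=0$ is immediate.
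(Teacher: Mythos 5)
Your proof is correct and takes the same overall approach as the paper: the vanishing coming from $\varphi(C_1)=C_1$ is handled by lifting to the universal cover and using that a linear line through the origin inside $\CC\times\lambda_2$ must be $\CC\times\{0\}$, and the other vanishing comes from the two reducible divisors of $|L|$ (Remark~\ref{rem:dueriducibili}). Where you diverge is in how you exploit those two reducible divisors. The paper's argument is a quick geometric observation: the preimage in $\CC^2$ of the two reducible members of $|L|$ is a union of horizontal and vertical lines, and a non-triangular matrix would carry horizontal lines to slanted lines not lying in that union, a contradiction. You instead first record that $\varphi^*$ is the identity on the numerical classes $[\Lambda_1],[\Lambda_2]\in NS(A)$ (so the two fibration directions cannot be swapped), deduce that $\varphi$ sends the $f_1$-component of each reducible divisor to an $f_1$-fiber, and then use that $\varphi$ is a group homomorphism to upgrade ``$\varphi$ sends a coset of the subgroup $\Lambda_1$ to a coset of $\Lambda_1$'' to ``$\varphi(\Lambda_1)=\Lambda_1$''. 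Your route is a bit longer but also logically cleaner: the paper's slanted-line argument implicitly uses that the first vanishing has already been established (to rule out a horizontal line being sent to a vertical one), whereas your argument for $\varphi_{12}=0$ is independent of $\varphi_{21}=0$. Both are valid; the paper's buys brevity, yours buys independence of the two halves and makes explicit why the two elliptic fibrations are individually preserved rather than interchanged.
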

\begin{proof}
 We consider  $\CC^2$ with the natural coordinates $(x,y)$ given by the construction, so that the line $x=0$ is the connected component through the origin of the preimage of $C_1$ in $\CC^2$. Since $\varphi(C_1)=C_1$ the matrix $\phi$ preserves $x=0$, so $\varphi_{12}=0$. 

Now we use that $\varphi^*L=L$. Since by Remark \ref{rem:dueriducibili}, $|L|$ contains exactly two reducible divisors, $\varphi$ either preserves or exchange them.  The preimage of both divisors on $\CC^2$ is a union of countably many "horizontal" lines $y=c$ and countably many vertical lines $x=c$. However, if $\varphi_{21}\neq 0$ any horizontal line  is mapped to a line that is neither horizontal nor vertical, a contradiction. 
\end{proof}

In particular $\varphi$ is given by two roots of the unity $\varphi_{jj}\in \CC$ giving automorphisms of the two elliptic curves: each $\varphi_{jj}$ gives an automorphism of $T_j$. 

We will now need the following well known facts on automorphisms of elliptic curves see for instance \cite[Section III.10]{SilAEC}.
\begin{lem}\label{lem:actions}
Let $\omega$ be a nontrivial automorphism of an elliptic curve of order $n$. Then $n=2,3,4$ or $6$. Moreover
\begin{enumerate}
\item for every $4-$torsion point $p\in T$, $\omega(p)\neq p$;
\item if $n=3,6$, then for every $2-$torsion point $p\in T$, $\omega(p)\neq p$;
\item if $n=4$ then there is exactly one $2-$torsion point $p\in T$ such that $\omega(p)=p$;
\item if $n=2$ then $\omega(p)=p$ for all $2-$torsion points $p\in T$.
\end{enumerate}
\end{lem}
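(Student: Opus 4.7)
The plan is to lift $\omega$ to the universal cover. Writing $T = \CC/\Lambda$ with $\omega$ fixing the origin, $\omega$ is simply multiplication by some $\alpha \in \CC^*$ satisfying $\alpha\Lambda = \Lambda$, and the finite order hypothesis forces $\alpha$ to be a root of unity. The classification in \cite{SilAEC} then gives $n = \mathrm{ord}(\alpha) \in \{2,3,4,6\}$, with the cases $n = 3, 6$ realised on $\Lambda = \ZZ[\zeta_3]$ by $\alpha = \zeta_3, \zeta_6$, the case $n = 4$ realised on $\Lambda = \ZZ[i]$ by $\alpha = i$, and $n = 2$ ($\alpha = -1$) working for any lattice.

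Next I would reduce items (1)--(4) to a single kernel computation: $p$ is fixed by $\omega$ if and only if $p \in \ker(\omega - \mathrm{id})$, and this is an isogeny of $T$ whose degree equals $|\alpha - 1|^2$ (the norm in the relevant imaginary quadratic order, or simply $4$ in the case $\alpha = -1$). So all four items amount to locating this kernel inside $T[2]$ or among the points of exact order $4$. I would also flag at the outset that in item (1) ``$4$-torsion point'' must be read as ``point of exact order $4$'', otherwise the statement would already contradict item (4).

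I would then run through the four cases. For $n = 2$, $\omega - \mathrm{id}$ is multiplication by $-2$, whose kernel is $T[2]$, giving (4) immediately; and for $p$ of exact order $4$, $-p \neq p$, giving (1). For $n = 3$ the norm $|\zeta_3 - 1|^2 = 3$ shows the kernel has order $3$, so it meets $T[2]$ and the exact-order-$4$ locus only in $0$, giving (1) and (2). For $n = 4$ the norm $|i-1|^2 = 2$ shows the kernel has order $2$, and its nontrivial element is the class of $(1+i)/2$, which is $2$-torsion; this yields (3) and, since the kernel lies in $T[2]$, item (1). Finally for $n = 6$ one has $|\zeta_6 - 1|^2 = 1$, so $\omega - \mathrm{id}$ is an isomorphism of $T$ and only the origin is fixed.

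There is no real obstacle here: the lemma is a compilation of standard facts, and the only subtle point is the convention on item (1). Once the kernel interpretation is in place, each case reduces to a one-line norm computation in $\ZZ$, $\ZZ[i]$, or $\ZZ[\zeta_3]$.
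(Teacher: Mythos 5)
The paper offers no proof of this lemma at all: it is stated as a collection of ``well known facts'' with a bare citation to Silverman \cite[Section III.10]{SilAEC}, and no proof environment follows. So there is no argument in the paper to compare against; your job here was effectively to supply the missing proof, and you have done so correctly.

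Your kernel-of-isogeny argument is the right one and each norm computation checks out: $|\!-\!1-1|^2=4$ gives $\ker(\omega-\mathrm{id})=T[2]$ for $n=2$; $|\zeta_3-1|^2=3$ makes the kernel a group of order $3$, hence disjoint from $T[4]\setminus\{0\}$; $|i-1|^2=2$ gives a kernel of order $2$ whose nontrivial element $(1+i)/2 \bmod \ZZ[i]$ is $2$-torsion; and $|\zeta_6-1|^2=1$ makes $\omega-\mathrm{id}$ an isomorphism. Your flag about item (1) is also the right call: ``$4$-torsion point'' must mean a point of exact order $4$, since otherwise (1) and (4) would clash for $n=2$, and this reading is consistent with how the lemma is invoked in the proof of Proposition \ref{prop:AutA} (where it is applied to $\bar a$ and $\bar b$, which are of exact order $4$). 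One small presentational suggestion: for $n=2$ your ``norm'' $|(-1)-1|^2=4$ is really just $\deg[2]=4$, and it is cleaner to say directly that $\omega-\mathrm{id}$ is multiplication by $-2$ on \emph{any} lattice, whereas the imaginary-quadratic norm argument genuinely requires the CM lattices $\ZZ[i]$ and $\ZZ[\zeta_3]$ that exist only when $n=3,4,6$; you implicitly do this but it is worth making explicit that the two cases rest on slightly different footing.
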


With these facts in mind we can prove

\begin{prop}\label{prop:AutA} We have the following possibilities for the group $G$ according to the cases in Remark \ref{rem_the3cases}.
\begin{itemize}

\item In case $1$: if $T_1$ has an automorphism of order $4$, $G$ is cyclic of order $4$ generated by the automorphism given by $\varphi_{11}=i$, $\varphi_{22}=1$, that is
\[
(x,y)\mapsto (i x,y).
\]  

\item In case $3$:  $G$ is the trivial group of order $1$.

\item In the remaining cases, $G$ is a cyclic group of order $2$ generated by the automorphism given by $\varphi_{11}=-1$, $\varphi_{22}=1$, the involution 
\[
(x,y)\mapsto (-x,y)
\]  
\end{itemize}
\end{prop}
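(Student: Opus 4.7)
The plan is to combine Lemma \ref{lem:diagonal} (diagonal form), Lemma \ref{lem:sublattice} (parity description of $\lambda$), Lemma \ref{lem:actions} (actions on torsion) with Remark \ref{rem:dueriducibili} (the two reducible members of $|L|$). Any $\varphi \in G$ writes as $\varphi = \mathrm{diag}(\varphi_{11},\varphi_{22})$, with each $\varphi_{jj}$ a group automorphism of $T_j$ fixing $0$. Since $\varphi$ is diagonal one has $\varphi(f_j^{-1}(c)) = f_j^{-1}(\varphi_{jj}(c))$ and $\varphi$ must permute the two reducible elements of $|L|$, so $\varphi^*L = L$ becomes the alternative
\[
\text{(A)}\ \varphi_{11}(\bar a) = \bar a,\ \varphi_{22}(\bar b) = \bar b \qquad\text{or}\qquad \text{(B)}\ \varphi_{11}(\bar a) = \bar a \oplus a_4,\ \varphi_{22}(\bar b) = \bar b \oplus b_2,
\]
together with the parity constraint from Lemma \ref{lem:sublattice} that $\varphi$ preserve the sublattice $\lambda$.

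I would first analyse (A). Since $\bar b$ is a non-trivial $4$-torsion point in every case, the equation $\varphi_{22}(\bar b) = \bar b$ forces $\varphi_{22} = 1$ by Lemma \ref{lem:actions}(1). It remains to determine the $\varphi_{11}$ fixing $\bar a$ such that $\mathrm{diag}(\varphi_{11},1)$ preserves $\lambda$. In case 3 the point $\bar a$ is a non-trivial $4$-torsion, so Lemma \ref{lem:actions}(1) again forces $\varphi_{11}=1$. In case 2, $\bar a \in \{a_1,a_2\}$ is a $2$-torsion different from $a_4$: Lemma \ref{lem:actions}(3) says the order-$4$ automorphisms $\pm i$ fix only $a_4 \neq \bar a$, and Lemma \ref{lem:actions}(2) excludes orders $3,6$, leaving $\varphi_{11} \in \{\pm 1\}$. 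In case 1 ($\bar a = 0$) any $\varphi_{11}$ fixes $\bar a$; a short computation on the basis shows that $\mathrm{diag}(-1,1)$ always preserves the parity of $\sum t_i$, that $\mathrm{diag}(\pm i,1)$ does so too whenever $T_1$ admits an order-$4$ automorphism (in Gaussian coordinates it sends $e_1 \leftrightarrow \pm e_2$), while the Eisenstein candidate $\mathrm{diag}(\omega,1)$ sends $e_2 \mapsto -e_1 - e_2$ and breaks the parity.

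I would then rule out (B) uniformly. Doubling the second equation gives $\varphi_{22}(2\bar b) = 2\bar b$, and Remark \ref{rem_the3cases} guarantees that $2\bar b$ is a non-trivial $2$-torsion distinct from $b_2$. The cases $\varphi_{22} = \pm 1$ collapse (B) to $b_2 = 0$ or $\bar b \oplus \bar b = b_2$, both forbidden; Lemma \ref{lem:actions}(2) excludes orders $3,6$; and for an order-$4$ automorphism $\varphi_{22} = \pm i$ one observes that the parity condition on the action of $\pm i$ on $\lambda_2$ rigidly forces $\bar e_4 = \pm i\bar e_3$, in which case $b_2 = (\bar e_3 + \bar e_4)/2 = (1 \pm i)\bar e_3/2$ is precisely the unique non-trivial $2$-torsion fixed by $\pm i$, so $\varphi_{22}$ cannot fix $2\bar b \in \{b_3,b_4\}$. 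Putting (A) and (B) together yields the three conclusions of the statement.

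The principal obstacle is the elimination of (B) in the presence of an order-$4$ automorphism on $T_2$: for instance in case 3 the map $\mathrm{diag}(-1,i)$ satisfies the first equation of (B) (since $-\bar a = \bar a \oplus a_4$), and $\pm i$ does fix a non-trivial $2$-torsion of $T_2$, so one must exploit the parity condition from Lemma \ref{lem:sublattice} to identify this fixed torsion with $b_2$, and then invoke the hypothesis $2\bar b \neq b_2$ from Remark \ref{rem_the3cases} to conclude.
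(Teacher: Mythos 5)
Your overall strategy (diagonal form, parity of the lattice, the two reducible members of $|L|$, and the action on torsion) matches the paper's, but you organize the case analysis differently and miss the paper's one key shortcut. The paper observes that $\varphi^2$ always \emph{preserves} both reducible divisors of $|L|$, whence $\varphi_{22}^2(\bar b)=\bar b$ in both alternatives (A) and (B); since $\bar b$ is a $4$-torsion point, Lemma \ref{lem:actions}(1) gives $\varphi_{22}^2=1$ immediately. Combined with the fact that $\varphi_{22}=-1$ would force either $\bar b\oplus\bar b=0$ (case A) or $\bar b\oplus\bar b=b_2$ (case B), this yields $\varphi_{22}=1$ in one stroke and collapses the whole alternative into case (A), so the order-$3,4,6$ subcases you analyse for $\varphi_{22}$ never arise. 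Once $\varphi_{22}=1$, the paper uses the parity of $\lambda$ to deduce $\varphi_{11}(a_4)=a_4$, and then runs the same three-way case split on $\bar a$ that you do.

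Your alternative route of directly excluding option (B) is workable but contains an imprecise step. For the order-$4$ subcase you assert that the parity condition ``rigidly forces $\bar e_4=\pm i\bar e_3$''; this is not literally true, since the basis $(\bar e_3,\bar e_4)$ was chosen to normalize the $2$-torsion labels, not the complex multiplication. What \emph{is} true, by the same computation the paper uses for $\varphi_{11}$ and $a_4$, is that $\varphi$ being diagonal with $\varphi\cdot\lambda=\lambda$ forces $\varphi_{22}$ to preserve both $\lambda_2$ and the index-$2$ sublattice $\lambda_2'$, hence $\varphi_{22}(b_2)=b_2$; if $\varphi_{22}$ had order $4$ this would be the \emph{unique} fixed $2$-torsion by Lemma \ref{lem:actions}(3), contradicting $\varphi_{22}(2\bar b)=2\bar b$ with $2\bar b\in\{b_3,b_4\}$. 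You should rephrase the argument in those terms rather than via a false normalization of the basis. Similarly, in your case-(A), case-$2$ analysis, the assertion that the order-$4$ automorphisms of $T_1$ ``fix only $a_4$'' is itself a consequence of the parity constraint ($\varphi_{11}(a_4)=a_4$) plus Lemma \ref{lem:actions}(3); it would be worth making that derivation explicit, as it is the crux of the bound on $\varphi_{11}$ in both cases $1$ and $2$.

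In summary: your proposal reaches the correct conclusions and uses the same ingredients as the paper, but it takes a longer detour (order-by-order elimination of (B)) in place of the paper's $\varphi^2$ observation, and the detour has one step stated incorrectly that needs to be repaired using the $\lambda_2'$-parity argument.
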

\begin{proof}
By Lemma \ref{lem:diagonal} $\varphi$ acts on the fibrations $f_j \colon A \rightarrow T_j$ acting on the codomain by $\varphi_{jj}$.

Since by Remark \ref{rem:dueriducibili}, $|L|$ contains exactly two reducible divisors, $f_1^*\bar{a}+f_2^* \bar{b}$ and $f_1^*(\bar{a} \oplus a_4)+f_2^* (\bar{b} \oplus b_2)$, $\varphi$ either preserves or exchange them. So, on $T_1 \times T_2$, the matrix $\varphi$ maps $(\bar{a},\bar{b})$   either to $(\bar{a},\bar{b})$ or to $(\bar{a} \oplus a_4, \bar{b} \oplus b_2)$.

We now show $\varphi_{22}=1$. In fact in both cases $\varphi_{22}^2(\bar{b})=\bar{b}$. Since 
$\bar{b}$ is a 4-torsion point, by Lemma \ref{lem:actions}, part 1, $\varphi_{22}^2=1$. Moreover, if $\varphi_{22} \neq 1$  (so  $\varphi_{22} = -1$) $\varphi_{22}(\bar{b})=\bar{b} \oplus b_2$ that implies $\bar{b} \oplus \bar{b} = b_2$, a contradiction. So $\varphi_{22}=1$.

As a first consequence, $\varphi_{11}(\bar{a})=\bar{a}$.

Now recall that the matrix $\varphi$ preserves the lattice $\lambda$. Since $\varphi_{22}=1$, then  $\varphi_{11}$ preserves $\lambda_1$ and the index $2$ sublattice $\lambda'_1 \subset \lambda_1$ of the elements of the form $m_1\bar{e}_1+ m_2 \bar{e}_2$ with $m_1+m_2$ even. This implies $\varphi_{11} (\bar{e}_1+  \bar{e}_2)-(\bar{e}_1+  \bar{e}_2) \in 2\lambda_1$. Dividing by $2$ we obtain $\varphi_{11}(a_4)=a_4$.

Now we distinguish the three cases according to Remark \ref{rem_the3cases}.

In case $3$, $\bar{a}$ is a $4-$torsion point. Then by  $\varphi_{11}(\bar{a})=\bar{a}$ and Lemma \ref{lem:actions}, part 1, $\varphi_{11}=1$.

In case $2$, $\bar{a}$ and $a_4$ are distinct $2-$torsion points fixed by $\varphi_{11}$.  Then by Lemma \ref{lem:actions}, part 2 and 3, $\varphi_{11}$ has order $1$ or $2$, so $\varphi_{11}=\pm 1$. On the other hand the map $(x,y)\mapsto(-x,y)$ preserves $\lambda$ so it defines an automorphism of $A$ that defines an element of $G$.

Finally, case $1$. In this case $\varphi_{11} (\bar{a})=\bar{a}$ holds indipendently by the choice of the complex number of $\varphi_{11}$. Still we have the condition $\varphi_{11}(a_4)=a_4$ that by Lemma \ref{lem:actions}, part 2, forces $\varphi_{11}^4=1$.
If $T_1$ has no automorphisms of order $4$, then we obtain $\varphi_{11}=\pm 1$ and we conclude as in case 2. Else analogous argument shows that $(x,y)\mapsto (i x,y)$ generates $G$.
\end{proof}

Then we can compute $\Aut(S)$.

\begin{prop}\label{prop_autS}
\[Aut(S) \cong G \times \ZZ/2\ZZ
\]
\end{prop}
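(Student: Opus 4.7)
The strategy is to split the central exact sequence
\[
0 \longrightarrow \langle \sigma \rangle \longrightarrow \Aut(S) \longrightarrow G \longrightarrow 0
\]
recalled earlier in this section; centrality of $\sigma$ will then promote the resulting extension into a direct product.

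In case $3$ of Remark \ref{rem_the3cases}, $G$ is trivial and there is nothing to do. In the other cases $G$ is cyclic, generated by one of the diagonal maps $\varphi(x,y)=(-x,y)$ or $\varphi(x,y)=(ix,y)$ from Proposition \ref{prop:AutA}; it suffices to lift such a generator to an automorphism $\tilde\varphi$ of $S$ of the same order. The first step is a routine tracing through the construction of Section \ref{Sec_theConstruction}: $\varphi$ fixes $p$ and the tangent direction to $t$ at $p$, hence extends to automorphisms of $B$ and of $B'$; because $\varphi^*L\cong L$ and $\varphi$ preserves the reduced branch divisor $R+C_1+E'$, it admits two lifts to $\Aut(S')$ differing by the covering involution; since the exceptional $(-1)$-curve $\hat E$ is $\varphi$-invariant, both lifts descend to $S$, giving two candidate automorphisms of $S$ which differ by $\sigma$.

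The heart of the argument is showing that at least one of the two candidate lifts has order equal to $\operatorname{ord}(\varphi)$, rather than $2\operatorname{ord}(\varphi)$. The plan is to exhibit a lift directly on the universal cover $\CC^2\to A$, on which $\varphi$ is represented by the diagonal linear map $\operatorname{diag}(c,1)$ that fixes the vertical axis (a connected component of the preimage of $C_1$) pointwise. The line bundle $L$ is described by an explicit factor of automorphy, and one can choose a defining section of $L^{\otimes 2}$ for $C_1+t$ compatibly with this linear action; this produces a canonical order-$\operatorname{ord}(\varphi)$ lift of $\varphi$ to the relative $\operatorname{Spec}$ of $\OO_A\oplus L^{-1}$ that descends to $S$. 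The main obstacle I expect is the bookkeeping of factors of automorphy needed to verify that the cyclic action on the universal cover indeed descends to an honest action of the correct order on $S$, and, in the order-four subcase, the check of compatibility with the hypothesis that $T_1$ admits an automorphism of order four.
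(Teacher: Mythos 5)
Your overall strategy matches the paper's: both proofs split the central extension $0 \to \langle\sigma\rangle \to \Aut(S) \to G \to 0$ by lifting the generator of $G$ to an automorphism of $S$ of the correct order, working through the explicit linear action on the universal cover $\CC^2$. You also correctly identify the only genuine difficulty, namely that each $\varphi\in G$ has two lifts to $\Aut(S)$ differing by $\sigma$, and one must show that at least one lift has order $\operatorname{ord}(\varphi)$ rather than $2\operatorname{ord}(\varphi)$ (if $\tilde\varphi^n=\sigma$ for a generator, so does $(\sigma\tilde\varphi)^n$, and the extension would not split).

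However, that identified difficulty is left open: you defer the order computation to a factor-of-automorphy bookkeeping on $\CC^2$ that you flag as the "main obstacle" without carrying out. This is precisely the nontrivial content of the proposition, so as written there is a gap. The paper closes it with a much lighter, purely local observation that you should incorporate: instead of tracking the covering data through the blow-ups $B\to B'\to S'$, look directly at the finite double cover $Z\to A$ birational to $S$, near the unique point $q$ over $p=(0,0)$. There $Z$ has local equation $z^2=f(x,y)$ with $f$ a local equation of the branch divisor; because $p$ is a $(3,3)$-point whose tacnodal tangent is $C_1=\{y=0\}$, the lowest-order term of $f$ is $y^3$, i.e.\ $f(x,y)=y^3+O(4)$. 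Any $g\in G$ acts as $(x,y)\mapsto(kx,y)$ with $k^4=1$, hence fixes $y$ and therefore fixes the degree-$3$ part of $f$. Since $g^*f=cf$ for a constant $c$ (the defining section of $C_1+t$ being unique up to scalar once $g^*L\cong L$ is fixed), comparing cubic terms gives $c=1$, so $g^*f=f$. The lift of $g$ that acts trivially on $z$ is therefore well defined and of the same order as $g$, and these lifts assemble into a splitting $G\to\Aut(S)$ commuting with $\sigma$. This bypasses the global automorphy computation entirely, and also handles your order-four subcase with no extra work.
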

\begin{proof}
Choose any element $g\in G$. Then by \ref{prop:AutA} $g$ acts as  $(x,y) \mapsto (k x,y)$ in the coordinates considered there, where $k$ is a complex number with $k^4=1$. 
In those coordinates $C_1$ is defined by $y$. 

Let $Z$ be the finite double cover of $A$ birational to $S$, and let $q\in Z$ be the unique point over $p=(0,0)$.
In a neighbourhood of $q$, $Z$ has equation $z^2=f(x,y)$ where $f$ is an equation of the branch locus, that is geometrically $g$-invariant.
Then there is a constant $c$ such that  $g^*f=cf$.
From $f(x,y)=y^3+O(4)$ we deduce $c=1$ and $g^*f=f$.

The involution on $Z$ induced by the Albanese morphism of $S$ acts near $q$ as $(x,y,z) \rightarrow (x,y,-z)$;
$g$ acts $(x,y) \rightarrow (kx,y)$, so the liftings of $g$ act as $(x,y,z) \mapsto (kx,y, \pm z)$. So the liftings acting locally trivially on the variable $z$ form a 
splitting map $G \rightarrow \Aut(S)$ mapping to a subgroup that commutes with the Albanese involution.
\end{proof}

\section{The quotients of $S$}\label{sec_quot}

In this brief section we shall prove Corollary \ref{cor_main}. By Proposition \ref{prop_autS}  $\Aut(S) =\langle \sigma \rangle \times G$ let us consider $H \leq \Aut(S)$, then we have the following diagram
  \[
\begin{xy}
\xymatrix{
& S'  \ar[dl]    \ar[dr]   &   \\
A    & &  X \cong S/H  \\ 
}
\end{xy}
\]
A natural question to address is the classification of quotient surfaces $S/H$. 

A first step in studying the quotient surfaces is to determine their numerical invariants. To this end we study the induced  action of the group $H$ on the cohomology groups of $S$. Recall that the the global sections of  $H^{1,0}(S)$ comes from the one forms on $A$ that we denote by $dx,dy$. Moreover, one of the two generators of the  global sections of  $H^{2,0}(S)$ can be identified with $dx \wedge dy$, and of course being $p_g(S)=2$  we have a global 2-forms $\omega$ not coming from $A$. To summarize this we can write
\begin{align*}
H^{2,0}(S) \cong \langle dx \wedge dy, \omega \rangle, & & H^{1,0}(S) \cong \langle dx, dy  \rangle.
\end{align*}
 
Let us denote by $g$ the generator of the cyclic group $G$, which has order $2$ or $4$ according to the three cases of Proposition \ref{prop:AutA}. The same proposition describes completely the induced action on the generators of the cohomology groups in each case.  In particular, we have 
\begin{align*}
dx \mapsto g^*dx= \begin{cases}  -dx, \\idx
\end{cases} &  &dy \mapsto g^*dy=dy.
\end{align*}

If $H$ is trivial or $H \cong <\sigma>$ then we know that the quotients are respectively $S$ or birational to $A$. Else $H \cap G \neq \{1\}$ and this  yields at once that $q(X) =1$. More precisely, $H^1(X)\cong <dy>$,  and thus we have proved Corollary \ref{cor_main}. We can remark already that no quotient $X$ can be a K3 surface.  Moreover, we have
 \[
g^*(dx \wedge dy) = \begin{cases}  -dx\wedge dy , \\idx \wedge dy.
\end{cases} 
\]

Therefore we have that $p_g(X)$ is either $0$ or $1$ according to the $H$-invariance of $\omega$ .


\section{Towards the Mumford-Tate conjecture}\label{sec_moreauto}

This section is devoted to explain the strategy used up to know for proving the Mumford Tate conjecture for surfaces with $p_g(S)=q(S)=2$ and of maximal Albanese dimension.

Let $S$ be a smooth projective complex surface
with invariants $p_g(S) = q(S) = 2$,
and assume that the Albanese morphism $\alpha \colon S \to A$
is surjective.
We can make the following general observations (see also \cite{CoPe20}). It holds:
\begin{enumerate}
 \item The induced map on cohomology
  $\alpha^* \colon H^*(A,\ZZ) \to H^*(S,\ZZ)$ is injective.
  The orthogonal complement
  $H^2_{\new} = H^*(A,\ZZ)^\perp \subset H^*(S,\ZZ)$
  is a Hodge structure of weight~$2$
  with Hodge numbers $(1,n,1)$,
  where $n = h^{1,1}(S) - 4$.
  Such a Hodge structure is said to be of \emph{K3~type}.
 \item Let $S'$ be a smooth projective complex surface
  with invariant $p_g(S') = 1$.
  Then Morrison~\cite{MorrIsog} showed that there
  exists a K3~surface~$X'$ together with
  an isomorphism $\iota' \colon H^2(S',\QQ)^\tra \to H^2(X',\QQ)^\tra$
  that preserves the Hodge structure, the integral structure,
  and the intersection pairing.
  (Here $(\_)^\tra$ denotes
  the \emph{transcendental} part of the Hodge structure,
  that is, the orthogonal complement of the Hodge classes.)
\end{enumerate}

We now look closely to our surfaces $S$ with $p_g(S)=q(S)=2$, for which we know that the Albanese map is a generically finite  cover. 

Then we have

\begin{prop} \label{main-theorem} 
 Let $S$ be a smooth projective complex surface
 with invariants $p_g(S) = q(S) = 2$,
 and assume that the Albanese morphism $\alpha \colon S \to A$
 is surjective. Then there exists a K3~surface~$X$
  and an isomorphism of Hodge structures
   $$\iota \colon (H^2_{\new}(S,\QQ))^\tra \to H^2(X,\QQ)^\tra.$$
\end{prop}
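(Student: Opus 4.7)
My plan is to mimic Morrison's argument from observation~(2), but applied to the sub-Hodge structure $H^2_\new(S,\QQ)$ rather than to the full second cohomology of a surface with $p_g=1$. The starting point is observation~(1): $H^2_\new(S,\QQ)$ is a rational Hodge structure of weight $2$ with Hodge numbers $(1,n,1)$, hence of K3 type. Passing to the transcendental part yields a sub-Hodge structure $T := (H^2_\new(S,\QQ))^\tra$ still of K3 type, with Hodge numbers $(1,n',1)$ for some $n' \le n$, on which the cup product pairing restricts non-degenerately. Non-degeneracy follows because $\alpha^* H^2(A,\QQ)$ is already a non-degenerate orthogonal summand of $H^2(S,\QQ)$ (it contains the pullback of an ample class from $A$), so its orthogonal complement $H^2_\new(S,\QQ)$ is also non-degenerate, and the further restriction to the transcendental part remains non-degenerate.

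Second, I would realize $T$ as the transcendental Hodge structure of some K3 surface $X$. The idea is to choose an integral lattice structure on $T$, for instance by saturating $T \cap H^2(S,\ZZ)$ inside $H^2(S,\ZZ)/\mathrm{tors}$; this yields an integral lattice of signature $(2,n')$ that underlies a polarized Hodge structure of K3 type. Following Morrison, one invokes Nikulin's lattice embedding theorems to embed this lattice primitively into the K3 lattice $\Lambda_{K3} = 3U \oplus 2 E_8(-1)$ of signature $(3,19)$. The rank and signature conditions are compatible, since $n' \le h^{1,1}(S) - 4 \le 18$. Once such a primitive embedding is fixed, the surjectivity of the period map for algebraic K3 surfaces (together with the global Torelli theorem) produces a K3 surface $X$ whose transcendental lattice realizes the prescribed Hodge structure; tensoring with $\QQ$ and transporting the isomorphism gives the desired map $\iota$.

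The main obstacle I anticipate is the lattice-theoretic step, namely verifying Nikulin's discriminant-form hypotheses for the primitive embedding of $T$ into $\Lambda_{K3}$. The advantage here is that the conclusion of Proposition~\ref{main-theorem} is formulated at the level of rational Hodge structures: one therefore has the freedom to replace the chosen integral model of $T$ by any isogenous integral lattice, which substantially loosens Nikulin's criteria. This is precisely the flexibility that makes Morrison's original theorem for $p_g=1$ surfaces work, and the argument transfers almost verbatim to our situation, once one has isolated $(H^2_\new(S,\QQ))^\tra$ as a K3-type sub-Hodge structure of the same shape as the transcendental part of such an auxiliary surface.
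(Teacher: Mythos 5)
Your proposal is correct and follows essentially the same route as the paper: the paper's entire proof is the one-line remark that the statement ``is a direct consequence of \cite{MorrIsog}'', and what you have written is precisely the unpacking of Morrison's lattice-theoretic argument (isolate a polarized Hodge structure of K3 type, choose an integral model, embed primitively into the K3 lattice via Nikulin, then invoke surjectivity of the period map and the Torelli theorem), applied to the sub-Hodge structure $(H^2_{\new}(S,\QQ))^{\tra}$ rather than to the full $H^2$ of a $p_g=1$ surface. One small remark on your rank bound: $h^{1,1}(S)-4$ is unbounded if $S$ is not assumed minimal, so it is cleaner to observe that blow-ups only add algebraic classes, which disappear after taking $(\_)^{\tra}$; on the minimal model Noether's formula and Bogomolov--Miyaoka--Yau give $h^{1,1}=14-K^2\le 12$, so indeed $n'\le 8\le 18$ and the embedding is unobstructed.
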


This is a direct  consequence of \cite{MorrIsog}. Notice that the surface $X$ is related only Hodge theoretically to $S$. Therefore, this is not enough to prove the conjecture, to this end we have to address the following question:

\smallskip
{\narrower\it\noindent
 Do there exist $X$ and~$\iota$  as above,
 such that $\iota$ is motivated in the sense of Andr\`e?
 \par}
\smallskip

Let us briefly explain and recall some facts on categories of motives, and for the reader convenience we state the motivic Mumford--Tate conjecture, for a more detailed introduction on the subject see \cite{Moonen, Moonenh1}. First we recall some facts about Chow motives and Andr\'e motives of surfaces. 
We do not need full generality, so let $K$ be a subfield of~$\CC$.

Given smooth and projective varieties $X$ and $Y$ over a field $K$ (i.e., objects in the category $\SmPr_{/K}$) of dimension $d_X$ and $d_Y$ respectively, a correspondence of degree $k$ from $X$ to $Y$ is an element $\gamma$ of $A^{d_X+k}(X \times Y)$. Then $\gamma$ induces a map $A^{\cdot}(X) \ra A^{\cdot+k}(Y)$ by the formula
\[
\gamma_*(\beta) := \pi_{2*}(\gamma \cdot \pi^*_1 (\beta)),
\]
where $\pi_1\colon X\times Y \ra X$ and $\pi_2\colon X\times Y \ra Y$ denote the projections.
The category $\ChMot$ of \emph{Chow motives} (with rational coefficients) over $K$ is defined
as follows:
\begin{itemize}
	\item the objects of $\ChMot$ are triples $(X, p, n)$ such that $X \in \SmPr_{K},\, p \in A^{d_X} (X \times X)$ is an idempotent correspondence (i.e. $p_* \circ p_* = p_*$) and $n$ is an integer;
	\item the morphisms in $\ChMot$ from $(X, p, n)$ to $(Y, q, m)$ are  correspondences $f\colon  X \ra Y$
of degree $n-m$, such that $f \circ p = f = q \circ f$. 
\end{itemize}
We recall that $\ChMot$ is an additive, $\QQ$-linear, pseudoabelian category, see \cite[Theorem~1.6]{Scholl}.

We consider from here only the cases in which we are interested hence let us suppose $K=\CC$, There exists a functor 
\[
h \colon \SmPr_{/\CC}^{\opp} \to \ChMot\ \ \mbox{such that } h:X\mapsto (X,\textrm{id},0)
\]
from the opposite category of smooth projective varieties over~$\CC$
to the category of Chow motives.

We denote also with $A^i(M)$ the $i$-th Chow group
of a motive $M \in \ChMot$.
In general, it is not known whether
the K\"unneth projectors~$\pi_i$ are algebraic,
so it does not (yet) make sense to speak of the summand $h^i(X) \subset h(X)$
for an arbitrary smooth projective variety $X/\CC$.
However, a so-called Chow--K\"unneth decomposition does exist
for curves~\cite{Manin_motive},
for surfaces~\cite{Murre_motsurf},
and for abelian varieties~\cite{DenMur}.
For algebraic surfaces there is in fact the following theorem,
which strengthens the decomposition of the Chow motive.
Statement is copied from \cite[Theorem 2.2]{Lat19}.

\begin{thm} \label{ChSurf} 
Let $S$ be a smooth projective surface over~$\CC$.
There exists a self-dual Chow--K\"unneth decomposition $\{\pi_i\}$ of~$S$,
with the further property that there is a splitting
\[
\pi_2 = \pi_2^\alg + \pi_2^\tra \quad \in A^2(S \times S)
\]
in orthogonal idempotents, defining a splitting
$h^2(S) = h^2_\alg(S) \oplus h^2_\tra(S)$ with Chow groups
\[
A^i(h^2_\alg(S)) =
\begin{cases}
					\textrm{NS}(S) & \text{if $i = 1$,}\\
					0 & \text{otherwise,}
				\end{cases} \qquad
\text{and} \quad A^i(h^2_\tra(S)) =
				\begin{cases}
					A^2_{\AJ}(S) & \text{if $i = 2$,}\\
					0 & \text{otherwise.}
				\end{cases}
				\]
				Here $A^2_{\AJ}(S)$ denotes the kernel of the Abel--Jacobi map.
\end{thm}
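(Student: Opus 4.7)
The plan is to follow Murre's classical construction of a Chow--K\"unneth decomposition for surfaces and then to refine the middle summand as in Kahn--Murre--Pedrini. First I would construct a base Chow--K\"unneth decomposition $\{\pi_i\}_{i=0}^{4}$ of $h(S)$: fix a zero-cycle $e \in A^2(S)$ of degree one, set $\pi_0 = e \times S$ and $\pi_4 = S \times e$, and use a Poincar\'e divisor on $S \times \Pic^0(S)$ together with the isogeny $\Alb(S) \sim \Pic^0(S)^{\vee}$ to produce a correspondence $\pi_1$ projecting onto $H^1(S)$; set $\pi_3 = {}^{t}\pi_1$ and $\pi_2 = \Delta_S - \pi_0 - \pi_1 - \pi_3 - \pi_4$. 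Idempotency, mutual orthogonality, and self-duality ($\pi_{4-i} = {}^{t}\pi_i$) of the $\pi_i$ can then be arranged by Murre's standard symmetrization procedure.

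Next I would split the middle piece. Choose an orthogonal basis $D_1, \ldots, D_\rho$ of $\textrm{NS}(S)_{\QQ}$ with respect to the intersection pairing, and define
\[
\pi_2^{\alg} := \sum_{j=1}^{\rho} \frac{D_j \times D_j}{D_j \cdot D_j}, \qquad \pi_2^{\tra} := \pi_2 - \pi_2^{\alg}.
\]
The correspondence composition formula $(D_j \times D_j) \circ (D_k \times D_k) = (D_j \cdot D_k)\, D_k \times D_j$, combined with the orthogonality of the $D_j$, shows at once that $\pi_2^{\alg}$ is idempotent. Mutual orthogonality with the remaining $\pi_i$ is then arranged by choosing the support of $e$ and the representative of $\pi_1$ to lie off each $D_j$. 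Hence $\pi_2^{\alg}$ and $\pi_2^{\tra}$ are orthogonal idempotents summing to $\pi_2$, giving a splitting $h^2(S) = h^2_{\alg}(S) \oplus h^2_{\tra}(S)$. The Chow-group computations now reduce to evaluating each projector on $A^{\bullet}(S)$: the correspondence $\pi_2^{\alg}$ acts as the orthogonal projection onto $\textrm{NS}(S) \subset A^1(S)$ and vanishes on $A^0$ and $A^2$, while $\pi_0, \pi_4, \pi_1, \pi_3$ together exhaust $[e]$, $\Pic^0(S)$, and the Albanese image of zero-cycles. What is left in $A^2$ for $\pi_2^{\tra}$ is therefore exactly $\ker\bigl(A^2(S) \to \Alb(S)\bigr) = A^2_{\AJ}(S)$.

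The most delicate point is realizing all these projectors simultaneously as mutually orthogonal idempotents at the level of $A^2(S \times S)_{\QQ}$: naively a composition such as $\pi_2^{\alg} \circ \pi_1$ vanishes only modulo homological equivalence, not rationally. Fixing this requires Murre's fine analysis of correspondences on surfaces together with Bloch's decomposition of the diagonal, which allows one to replace $\pi_1$ by a cohomologically equivalent representative whose support is disjoint from the $D_j \times D_j$'s. Once this bookkeeping is carried out, the descriptions of the Chow groups of $h^2_{\alg}(S)$ and $h^2_{\tra}(S)$ read off precisely as stated.
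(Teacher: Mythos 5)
The paper itself offers no proof of this statement: it is quoted verbatim from Laterveer, and the paper's ``proof'' is simply a pointer to \cite{Lat19} and, through it, to Kahn--Murre--Pedrini. Your sketch follows exactly that route, and most of it is sound: Murre's Chow--K\"unneth projectors $\pi_0,\dots,\pi_4$ for a surface (with $\pi_0=e\times S$, $\pi_4=S\times e$, $\pi_3={}^t\pi_1$, $\pi_2=\Delta-\sum_{i\neq 2}\pi_i$), the definition $\pi_2^{\alg}=\sum_j (D_j\times D_j)/D_j^2$ for an orthogonal basis $D_1,\dots,D_\rho$ of $\textrm{NS}(S)_\QQ$ with all $D_j^2\neq 0$ (available because the intersection form is nondegenerate), the composition rule $(D_j\times D_j)\circ(D_k\times D_k)=(D_j\cdot D_k)\,(D_k\times D_j)$ yielding idempotency, the vanishing of the compositions with $\pi_0$ and $\pi_4$, and the evaluation of the resulting projectors on $A^\bullet(S)$ are all correct.

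The gap is in the orthogonality of $\pi_2^{\alg}$ with $\pi_1$ and $\pi_3$ at the Chow level, which you rightly flag as the crux but then resolve with an argument that does not work. Replacing $\pi_1$ by a merely ``cohomologically equivalent representative'' changes its class in $A^2(S\times S)$, hence changes the Chow--K\"unneth projector and the motive $h^1(S)$ itself; only changes of the cycle-level representative within the same rational equivalence class are permissible. More seriously, disjointness of supports inside $S\times S$ does not kill a composition: $\pi_1\circ\pi_2^{\alg}$ is computed as $p_{13*}\bigl(p_{12}^*\pi_2^{\alg}\cdot p_{23}^*\pi_1\bigr)$ on $S\times S\times S$, where the pulled-back supports are $3$-folds in a $6$-fold and meet in the expected dimension regardless of how one positions the original $2$-cycles in $S\times S$. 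Bloch's decomposition of the diagonal (a statement about representability of $CH_0$) is also not what is needed here. The actual mechanism in Kahn--Murre--Pedrini is structural rather than geometric: one shows the offending compositions act as zero on all Chow groups and in cohomology, and then uses that $h^1(S)\cong h^1(\Alb(S))$ is of abelian type (so its Hom-groups to and from Lefschetz motives in $\ChMot$ vanish, and nilpotence can be promoted to vanishing), after which the standard idempotent-correction procedure produces a genuinely orthogonal system. That input about the motive $h^1$ is the missing ingredient in your bookkeeping paragraph; without it the argument does not close.
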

\begin{proof}
For the proof see \cite[Theorem 2.2]{Lat19} and references therein. 
\end{proof}

The idea to construct such K3~surface $X$ is to exploit the automorphism group $G$ of $S$ and prove that a quotient $S/H$  by some subgroup $H \leq G$  is birational to $X$. Of course this will give us a weak answer to the previous question.  Nevertheless,  it will suffices to prove that  -- using  the notion of \emph{motivated cycles}
introduced by Andr\'e~\cite{AndrMoti} --  there exist $X$ and~$\iota$  as above,
 such that $\iota$ is motivated.

\smallskip

To speak about motivic Mumford--Tate conjecture we need to introduce the notion of {\it motivated cycles} (for a brief introduction see e.g. \cite[Section 3.1]{Moonen}).

\begin{defin}
	Let $K$ be a subfield of~$\CC$,
 and let $X$ be a smooth projective variety over~$K$.
A class $\gamma$ in $H^{2i}(X)$ is called
a \emph{motivated cycle} of degree~$i$
if there exists an auxiliary smooth projective variety~$Y$ over~$K$
such that $\gamma$ is of the form $\pi_*(\alpha \cup \star\beta)$,
where $\pi \colon X \times Y \to X$ is the projection,
$\alpha$ and~$\beta$ are algebraic cycle classes in $H^*(X \times Y)$,
and $\star\beta$ is the image of~$\beta$ under the Hodge star operation.
\end{defin}

Every algebraic cycle is motivated,
and under the Lefschetz standard conjecture the converse holds as well.
The set of motivated cycles naturally forms a graded $\QQ$-algebra.
The category of motives over~$K$, denoted~$\Mot_K$,
consists of objects $(X,p,m)$,
where $X$ is a smooth projective variety over~$K$,
$p$ is an idempotent motivated cycle on $X \times X$,
and $m$ is an integer.
A morphism $(X,p,m) \to (Y,q,n)$
is a motivated cycle~$\gamma$ of degree $n-m$ on $Y \times X$
such that $q \gamma p = \gamma$.
We denote with $\HH(X)$ the object $(X,\Delta,0)$,
where $\Delta$ is the class of the diagonal in $X \times X$.
The K\"unneth projectors $\pi_i$ are motivated cycles,
and we denote with $\HH^i(X)$ the object $(X,\pi_i,0)$.
Observe that $\HH(X) = \bigoplus_i \HH^i(X)$.
This gives contravariant functors $\HH(\_)$ and $\HH^i(\_)$
from the category of smooth projective varieties over~$K$ to~$\Mot_K$.
			
\begin{thm} 
\label{mot-props}
	The category $\Mot_K$ is Tannakian over~$\QQ$,
	semisimple, graded, and polarised.
	Every classical cohomology theory of smooth projective varieties over~$K$
	factors via~$\Mot_K$.
\begin{proof}
	See th\'eor\`eme~0.4 of~\cite{AndrMoti}.
\end{proof}
\end{thm}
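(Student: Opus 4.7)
The plan is to follow André's construction in \cite{AndrMoti} and verify the four claimed properties in turn, exploiting the single key innovation that defining cycles are now allowed to involve the Hodge star operator $\star$.

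First I would check that motivated cycles form a well-behaved system of correspondences: they are a graded $\QQ$-subalgebra of $H^*$, stable under pullbacks, pushforwards, transposition, and composition. This endows $\Mot_K$ with the structure of a $\QQ$-linear, pseudoabelian, rigid symmetric monoidal category, the duals being supplied by Poincar\'e duality (whose class is algebraic, hence motivated). Any classical Weil cohomology theory $H^*$, being a tensor functor on the category of algebraic correspondences, factors through $\Mot_K$ essentially by construction, since motivated cycles live inside $H^*$ and all structural data are compatible.

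Next I would prove that the K\"unneth projectors $\pi_i \in H^{2 d_X}(X \times X)$ are motivated. This is the decisive input, and it is exactly where enlarging algebraic cycles to motivated ones pays off: using hard Lefschetz one expresses each $\pi_i$ as a polynomial in the diagonal and the Lefschetz/dual Lefschetz operators, and the dual Lefschetz operator is expressible via $\star$ applied to an algebraic class. Once this is settled, $\HH(X)=\bigoplus_i \HH^i(X)$ is a genuine decomposition in $\Mot_K$, producing the grading and, as a byproduct, the weight filtration.

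The main obstacle and the deepest step is semisimplicity together with the existence of a polarisation. The plan is to imitate Jannsen's theorem on numerically trivial correspondences, but now in the motivated setting: the Hodge--Riemann bilinear relations, combined with hard Lefschetz, produce on each $\mathrm{End}(\HH(X))$ a positive definite involution of $\QQ$-algebras. A standard trace/positivity argument then forces $\mathrm{End}(M)$ to be a finite-dimensional semisimple $\QQ$-algebra for every $M\in\Mot_K$, so the category itself is semisimple, and the polarisation extends functorially to all objects. Fixing a fiber functor $\omega$, say Betti cohomology, together with rigidity and semisimplicity yields the Tannakian property via the standard reconstruction theorem. The delicate point throughout is that semisimplicity cannot be obtained with algebraic correspondences alone -- the very reason for passing from $\ChMot$ to $\Mot_K$ -- and this step is what makes the Hodge star an unavoidable ingredient of the definition.
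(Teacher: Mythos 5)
Your outline is a faithful summary of André's own argument, which is precisely what the paper invokes by citing th\'eor\`eme~0.4 of \cite{AndrMoti}: stability of motivated cycles under the correspondence calculus gives the rigid tensor category and the factorisation of classical realisations; motivatedness of the Lefschetz involution makes the K\"unneth projectors motivated (Lieberman's trick), yielding the grading; and the Hodge--Riemann positivity furnishes a polarisation from which semisimplicity and the Tannakian property follow by the usual positive-involution and fibre-functor arguments. One small caveat on your closing remark: semisimplicity per se \emph{can} be achieved with algebraic correspondences alone, by Jannsen's theorem for motives modulo numerical equivalence; what genuinely requires enlarging to motivated cycles is the unconditional existence of the grading, the polarisation, and a realisation (fibre) functor, from which semisimplicity then follows for the ``right'' structural reason.
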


\begin{defin} 
	Let $K$ be a subfield of~$\CC$.
	An \emph{abelian motive} over~$K$ is an object
	of the Tannakian subcategory of $\Mot_K$
	generated by objects of the form $\HH(X)$
	where $X$ is either an Abelian variety
	or $X = \Spec(L)$ for some finite extension
	$L/K$, with $L \subset \CC$.
										
	We denote the category of abelian motives over~$K$ with~$\AbMot_K$.
\end{defin}
									
Finally we need the following theorem
									
\begin{thm} 
	
	\label{hodge-is-motivated}
	The Hodge realization functor $H(-)$ restricted to the subcategory of abelian motives is a full functor.
\begin{proof}
See th\'eor\`eme~0.6.2 of~\cite{AndrMoti}.
\end{proof}
\end{thm}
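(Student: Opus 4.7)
\noindent The plan is to follow the strategy of Andr\'e in \cite{AndrMoti}, reducing fullness of $H$ on $\AbMot_K$ to the statement that every Hodge class on an abelian variety is a motivated cycle, and then appealing to the combination of Deligne's theorem on absolute Hodge cycles with the Lefschetz standard conjecture for abelian varieties.

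First, by Tannakian formalism, since $H$ is a $\QQ$-linear tensor functor between neutral Tannakian categories over $\QQ$, fullness reduces to the surjectivity of
\[ \mathrm{Hom}_{\AbMot_K}(\mathbf{1}, M) \longrightarrow \mathrm{Hom}_{\QHS}(\QQ(0), H(M)) \]
for every $M \in \AbMot_K$. Indeed, any morphism $H(M) \to H(N)$ in $\QHS$ is a Hodge class in $H(M^\vee \otimes N)$, and $M^\vee \otimes N$ is still an abelian motive. By the very definition of $\AbMot_K$, together with compatibility of motivated cycles with sums, tensor products, duals, K\"unneth components, and splitting of idempotents, this further reduces to the concrete statement: for every abelian variety $X$ over $K$ and every integer $k \geq 0$, every Hodge class in $H^{2k}(X,\QQ)(k)$ is a motivated cycle on $X$. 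The extension to $\Spec(L)$ for finite $L/K$ is straightforward since the associated Hodge structure is trivial.

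The central input is then a pair of classical results. On the one hand, Deligne's theorem asserts that every Hodge cycle on an abelian variety is absolutely Hodge. On the other hand, the Lieberman--Kleiman theorem establishes the Lefschetz standard conjecture for abelian varieties, so that the Hodge $\star$ operator relative to a polarization is induced by an algebraic correspondence. Combining these, given a Hodge class $\alpha \in H^{2k}(X,\QQ)(k)$, one produces an auxiliary abelian variety $Y$ and algebraic classes $\beta, \gamma$ on $X \times Y$ such that $\alpha = \pi_{X*}(\beta \cup \star \gamma)$, which is exactly the form required by the definition of motivated cycle. Algebraicity of the K\"unneth projectors for abelian varieties (Deninger--Murre) is used throughout to identify the relevant cohomology pieces with the motives that one actually wants to compare.

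The hard step is the bridging move in the previous paragraph: turning Deligne's principle ``Hodge implies absolutely Hodge'', which a priori is only a statement about how Hodge classes behave under all complex embeddings, into the explicit expression $\pi_*(\beta \cup \star \gamma)$ with $\beta, \gamma$ algebraic. This is the technical core of Andr\'e's Th\'eor\`eme~0.6.2, and it is precisely here that the restriction to abelian motives is essential: for general smooth projective varieties the Lefschetz standard conjecture is not known, so the $\star$-operator cannot be algebraicized, and the argument breaks down outside $\AbMot_K$.
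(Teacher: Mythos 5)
The paper does not actually give a proof of this theorem: it is a citation to Andr\'e's th\'eor\`eme~0.6.2 in \cite{AndrMoti}. So the right question is whether your sketch is a faithful summary of Andr\'e's argument, and on that front there is a genuine gap in the crucial step.

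Your first two paragraphs are correct and standard: fullness reduces, via the Tannakian formalism and closure of $\AbMot_K$ under duals, tensor products, and retracts, to showing that every Hodge class in $H^{2k}(X,\QQ)(k)$ for $X$ an abelian variety is a motivated cycle. The problem is the third paragraph. You claim the result follows by \emph{combining} Deligne's theorem (Hodge classes on abelian varieties are absolutely Hodge) with the Lefschetz standard conjecture for abelian varieties (Lieberman). There is no such combination. Being absolutely Hodge gives compatibility conditions under Galois conjugation and comparison isomorphisms; it does not, even with the $\star$-operator known to be algebraic, produce the auxiliary variety $Y$ and the algebraic classes $\beta,\gamma$ with $\alpha=\pi_*(\beta\cup\star\gamma)$. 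Indeed, whether every absolutely Hodge cycle on an abelian variety is motivated remains an open question, so any argument that takes Deligne's theorem as a black box and ``post-processes'' it into a motivated cycle cannot be correct.

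What Andr\'e actually does is \emph{re-prove} Deligne's theorem from scratch inside the motivated framework, following the same two-step outline: (i) a ``principe de d\'eformation'' (Andr\'e's motivated analogue of Deligne's Principle~B, th\'eor\`eme~0.5 in \cite{AndrMoti}), showing that in a connected family of abelian varieties a flat family of Hodge classes that is motivated at one point is motivated everywhere; and (ii) the density of CM points in the relevant Shimura variety, which reduces the claim to abelian varieties of CM type, where the Hodge classes are spanned by split Weil classes, and these are shown to be motivated directly. The Lefschetz standard conjecture for abelian varieties enters in making the $\star$-operator and the inverse Lefschetz operator motivated (in fact algebraic) in step~(ii), but Deligne's theorem is never used as an input. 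To repair your sketch you would need to replace the ``combine Deligne~+~Lefschetz'' step with this deformation-to-CM argument; as written, the key implication does not hold.
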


By Theorem  \ref{mot-props}, the singular cohomology and $\ell$-adic cohomology functors
factor via~$\Mot_K$.
This means that if $M$ is a motive,
then we can attach to it a Hodge structure~$\HB(M)$
and an $\ell$-adic Galois representation $\Hl(M)$.
The Artin comparison isomorphism between singular cohomology
and a $\ell$-adic cohomology extends to an isomorphism of vector spaces
$\Hl(M) \cong \HB(M) \otimes \QQl$ that is natural in the motive~$M$.
												
We can state the motivated Mumford--Tate conjecture following \cite[Section 3.2]{Moonen}. We shall write $\GB(M)$ for the Mumford--Tate group $\GB(\HB(M))$.
Similarly, we write $\Gl(M)$ (resp.~$\Glc(M)$)
for $\Gl(\Hl(M))$ (resp.~$(\Glc(\Hl(M)$) for the Tate group.
The Mumford--Tate conjecture extends to motives:
for the motive~$M$ it asserts that the comparison isomorphism
$\Hl(M) \cong \HB(M) \otimes \QQl$
induces an isomorphism
$$\Glc(M) \cong \GB(M) \otimes \QQl.$$

The discussion we have given here  enable us to prove the following.
\begin{prop}
Let $S$ be a surface of general type as above  if there exists a subgroup $H$ of the automorphisms group~$G$ of $S$,
   and there exist a K3~surface~$X$ birational to $S/H$  and~$\iota$ as in (H) is \emph{motivated} (in the sense of Andr\'e) then
   the Tate and Mumford--Tate conjectures hold for~$S$.
   That is the Tate and Mumford--Tate conjectures hold for those~$S$
   that are deformation equivalent to such a surface $S$.
\end{prop}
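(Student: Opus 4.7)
The plan is to exhibit $\HH(S)$ as an object of $\AbMot_\CC$ and then deduce the Tate and Mumford--Tate conjectures from their known validity on each building block of the resulting abelian motive.

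I would begin by decomposing the Chow motive $h(S)$ via Theorem \ref{ChSurf} and transporting the decomposition to $\Mot_\CC$ through the natural functor $\ChMot\to\Mot_\CC$, obtaining a splitting of $\HH(S)$ whose summands I shall argue are abelian piece by piece. The unit pieces $\HH^0(S)$ and $\HH^4(S)$ together with the algebraic piece (the image in $\Mot_\CC$ of $h^2_\alg(S)$) are finite sums of Tate motives, hence trivially abelian. Since $\alpha\colon S\to A$ is surjective with $q(S)=2$, pullback identifies $\HH^1(S)\cong \HH^1(A)$ and, by Poincar\'e duality, $\HH^3(S)\cong \HH^1(A)(-1)$, so these odd pieces are summands of the motive of the abelian surface~$A$ and therefore abelian. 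For the transcendental middle piece one has the orthogonal decomposition
\[
h^2_\tra(S) \;=\; \alpha^* h^2_\tra(A) \;\oplus\; \bigl(h^2_\new(S)\bigr)^\tra,
\]
whose first summand is again a piece of the motive of $A$; the second summand is the decisive one.

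The hypothesis that $\iota$ is motivated means precisely that it provides an isomorphism in $\Mot_\CC$
\[
\iota\colon \bigl(h^2_\new(S)\bigr)^\tra \;\xrightarrow{\sim}\; h^2_\tra(X),
\]
where $X$ is the K3 surface birational to $S/H$. By Andr\'e's motivated version of the Kuga--Satake construction \cite{AndrMoti}, the transcendental motive of any projective K3 surface is an abelian motive; hence $\bigl(h^2_\new(S)\bigr)^\tra$ is abelian, and summing the contributions yields $\HH(S)\in\AbMot_\CC$.

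With $\HH(S)$ placed in $\AbMot_\CC$, the Mumford--Tate conjecture for $S$ reduces to its validity on each summand, which is known: for $\HH^1$ of the abelian surface $A$ (here isogenous to a product of elliptic curves, where Serre's theorem applies), for the transcendental motive of K3 surfaces (Tankeev, and in motivated form Andr\'e), and trivially for Tate motives; moreover, the conjecture is compatible with direct summands in $\Mot_\CC$. This gives the comparison $\Glc(\HH(S))\cong \GB(\HH(S))\otimes\QQl$, from which the Tate conjecture in codimension one on $S$ follows by the same formalism. The main obstacle I expect is the need to spread $S$, $X$ and a cycle representing $\iota$ to a common smooth model over a finitely generated subfield $K\subset \CC$, so that the $\ell$-adic Galois representations are actually defined and the comparison is Galois-equivariant; this is standard spreading-out but has to be handled with care. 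Finally, the deformation-equivalent statement follows from the constancy of both the Mumford--Tate group and the motivated structure on $\iota$ in smooth families of abelian motives (see \cite{AndrMoti}), which propagates the conclusion to every surface in the connected component of $\mathcal{M}^{\textnormal{can}}_{2,2,7}$ containing $[S]$.
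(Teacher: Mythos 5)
Your strategy runs parallel to that of \cite{CoPe20} (which the paper simply cites for the proof): decompose the motive of $S$ into abelian pieces of K3 type, recognize the ``new'' piece as the transcendental motive of a K3 quotient via the motivated $\iota$, invoke Andr\'e's Kuga--Satake result to place everything in $\AbMot_\CC$, and then conclude. The motivic decomposition and the identification of $\HH^1(S)\cong\HH^1(A)$, $\HH^3(S)\cong\HH^1(A)(-1)$ are correct, and the role of Andr\'e's theorem that the motive of a K3 surface is abelian is correctly identified as a crucial input.

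However, there is a real gap in the final step. You write that ``the conjecture is compatible with direct summands in $\Mot_\CC$'' and use this to pass from the Mumford--Tate conjecture on each summand to the conjecture for $\HH(S)$. That implication goes the wrong way. If the Mumford--Tate conjecture holds for $M=M_1\oplus M_2$, it does follow for each $M_i$ by projection; but knowing it for $M_1$ and $M_2$ separately does \emph{not} give it for $M$, because both $\GB(M)$ and $\Glc(M)$ sit inside the product $\GB(M_1)\times\GB(M_2)$ (resp.\ $\Glc(M_1)\times\Glc(M_2)$) as subgroups surjecting onto the factors, and a priori these two subgroups need not coincide under the comparison isomorphism. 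Closing this gap is precisely what requires the ``independence'' results of Commelin (\cite{Com16} for the product of an abelian surface and a K3 surface, \cite{Com19} for products of abelian varieties), which the paper's proof explicitly lists as necessary inputs and which your argument omits entirely. Without invoking them, the claim that $\HH(S)\in\AbMot_\CC$ does not by itself yield the Mumford--Tate conjecture for $S$. Your treatment of the deformation-equivalence part is also too quick --- the Mumford--Tate group is not constant in families, only generically so --- but this is a secondary issue next to the missing independence step.
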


\begin{proof} The proof of this theorem is contained in\cite[Section 5]{CoPe20}. We illustrate only the demonstration strategy in the realm of motives. 
												
The main idea in \cite{CoPe20} is that for surfaces $S$ with $p_g=2$ it is sometimes  possible to decompose the weight $2$ Hodge structure into two Hodge substructures of K3 type and see that  these Hodge substructures  are indeed the Hodge structures of either Abelian surfaces or K3 surfaces which are (birational) quotients of $S$. This geometric construction makes possible to consider the theory of motivated cycles introduced by Andr\'e, and to decompose the motive of $S$ into two abelian motives of K3 type. For these motives the Mumford--Tate conjecture is known. This, together with the main results of \cite{Com16} and \cite{Com19} allows to prove the Mumford-Tate conjecture for $S$. In \cite{CoPe20} it was used the fact that the surfaces studied are of maximal Albanese dimension hence there is naturally an Abelian surface as a quotient surface. 

\end{proof}

Let us summarize here, in the form of a table, the classification of the minimal surfaces of general type with $p_g=q=2$ and of maximal Albanese dimension. Moreover,  for each family we will indicate whether the Mumford--Tate conjecture has been proved or not.

\begin{table}[h]
 \begin{center}
 \begin{tabular}{cccccccccc}
  \textnumero &$K^2_S$ 
  & $\deg(\alpha)$ & $\#$ & $\dim$ &
  Name & \textsc{mtc} & \textsc{pq/{ms}} & Reference \\
  \hline
  1 & $8$  & $2$ & $2$ & $0^2$ &
  &
  & No & \cite{PRR} \\
  2 & $8$  & $ \{ 2,4,6\}$ & $4$ & $3^3,4$ &
 SIP &
  \checkmark & Yes & \cite{pe11} \\
  3 & $7$  & $3$ & $1$ & $3$ & PP7
  &
  \checkmark & Yes & \cite{PiPol17,CF18} \\
  4 & $7$  & $2$ & 3 & $3$ & 
  &
  & ? & \cite{rito, MatteoRoberto} \\ 
  \hline 
  5 & $6$  & $4$ & $1$ & $4$ & PP4
  &
  \checkmark & Yes & \cite{PP14} \\
    6 & $6$  & $3$ & $?$ & {$3$} & AC3
  & 
  & ? & \cite{AC,CS} \\
  7 & $6$  & $2$ & $1$ & {$3$} & PP2
  & 
  & ? & \cite{PP} \\
  8 &   $6$  &   $2$ &   $2$ &   $4^2$ & PP2
  &    \checkmark
  &   Yes & \cite{PP} \cite{Pi17} \\
  9 & $5$  & $3$ & $1$ & $4$ &
  CHPP &
  \checkmark & Yes & \cite{CH,PP13b} \\
  10 & $4$  & $2$ & $1$ & $4$ &
  Catanese &
  \checkmark & Yes & \cite{pe11,CML02}
 \end{tabular}
 \end{center}
  \caption{State of the art of the classification
  of minimal complex algebraic surfaces with invariants $p_g = q = 2$ and with maximal Albanese dimension.}

 \label{state_of_the_art}
\end{table}

  In Table \ref{state_of_the_art} have indicated, where possible,
  the number of families~($\#$)
  and the dimensions of the irreducible component containing them~($\dim$).
  Moreover, we point out if some members of the family
  are product-quotient surfaces~(\textsc{pq}) or mixed surfaces~(\textsc{ms}), in particular for some PP2 surfaces the proof is given by the second author in \cite{Pi17}, while for the remaining ones is unknown.
  In the last column,
  we give references to more detailed descriptions of the class.
  Finally,
  we put a checkmark in the column~\textsc{mtc}
  if the strategy given in \cite{CoPe20} is enough to prove the Tate and Mumford--Tate conjectures for a class. As one can see, up to now, it is the only way to prove  Mumford--Tate conjecture for these surfaces. 
  
Finally, we explain here the meaning of the half-line between the surfaces with $K^2_S \geq 7$ and $K^2_S \leq 6$. This line is due to the classification theorem of \cite{DJZ}, who recently proved that the classification of surfaces with $p_g=q=2$ and $K^2_S \leq 6$ is complete. 


\bigskip

Matteo Penegini, Universit\`a degli Studi di Genova, DIMA Dipartimento di Matematica, I-16146 Genova, Italy \\
\emph{e-mail} \verb|penegini@dima.unige.it|

\medskip

Roberto Pignatelli Universit\`a degli Studi di Trento, Dipartimento di Matematica,
I-38123 Trento,  Italy \\
\emph{e-mail} \verb|Roberto.Pignatelli@unitn.it|



%



\begin{thebibliography} {9}
%
\bibitem[AlCa22]{AC}
M. Alessandro, F. Catanese
{\it On the components of the Main Stream of the moduli space of surfaces of general type with $p_g=q=2$}, preprint available at arXiv:2212.14872

%
\bibitem[And96]{AndrMoti}
Y. Andr\'e {\it Pour une th\'eorie inconditionnelle des motifs}. In: Institut des Hautes \'Etudes Scientifiques. Publications Math\'ematiques {\bf 83} (1996), pp. 5 -- 49.
%
\bibitem[Bar87]{barth}
W. Barth: {\it Abelian surfaces with $(1,2)-$Polarization}.
Algebraic Geometry, Sendai, 1985, 41--84, Adv. Stud. Pure Math. \textbf{10}, North-Holland, Amsterdam, 1987.
%
\bibitem[BHPV03]{BHPV03}
W. Barth, K. Hulek, C.A.M. Peters, A. Van de Ven, \textit{Compact
Complex Surfaces}. Grundlehren der Mathematischen Wissenschaften,
Vol \textbf{4}, Second enlarged edition, Springer-Verlag, Berlin,
2003.
%
\bibitem[CaFr18]{CF18}
	N. Cancian, D. Frapporti,
	{\it On semi-isogenous mixed surfaces},
	Math. Nachr. {\bf 291}, 264--283 (2018).
	%
	\bibitem[CaSe22]{CS}
F. Catanese, E. Sernesi, {\it The Hesse pencil and polarizations of type (1,3) on Abelian surfaces}, preprint available at arXiv:2212.14877. 
	%
	\bibitem[ChHa06]{CH}
J. A. Chen, C. D. Hacon, {\it A surface of general type with $p_g = q = 2$ and $K_X^2 = 5$}, Pacif. J. Math. {\bf 223} (2006), 219--228.
	%
		\bibitem[CiML02]{CML02}
C.Ciliberto, M. Mendes Lopes, {\it On surfaces with $p_g = q = 2$ and
non-birational bicanonical maps}, Advances in Geometry {\bf 2.3} (2002), 281--300.
	%
	\bibitem[Com16]{Com16} 
	J. Commelin, {\it The Mumford--Tate conjecture for the product of an abelian surface
		and a K3 surface}, Documenta Mathematica {\bf 21} (2016),   1691--1713.
	%
	\bibitem[Com19]{Com19}  J. Commelin, {\it The Mumford-Tate conjecture for products of abelian varieties}, Algebr. Geom. {\bf 6} (2019), 650--677.
%
\bibitem[CoPe20]{CoPe20} 
	J. Commelin, M. Penegini, {\it On the cohomology of surfaces with $p_g = q = 2$ and maximal Albanese dimension}, Transactions of the American Mathematical Society, 2020, {\bf373},   1749--1773.
	%
	\bibitem[DeMu91]{DenMur}
	C. Deninger and J. P. Murre, {\it Motivic decomposition of abelian schemes and the Fourier transform}, in: J. Reine und Angew. Math. {\bf 422 }(1991),   201--219.
%
\bibitem[DJZ23]{DJZ}
J. Du, Z. Jiang, G. Zhang, {\it
Cohomological rank functions and surfaces of general type with pg=q=2}, preprint available at ArXiv:2309.05097

	%
	\bibitem[Lat19]{Lat19} R. Laterveer, {\it Algebraic cycles and triple K3 burgers} Ark. Mat. {\bf 57}(1) (2019), 157--189.
	%
	\bibitem[Man68]{Manin_motive}
	J. I. Manin, {\it Correspondences, motifs and monoidal transformations}. In: Mat. Sb.
	(N.S.) {\bf 77}  (1968), 475--507.

	%
\bibitem[Moo17a]{Moonenh1}
	B. Moonen, {\it On the Tate and Mumford--Tate conjectures in codimension 1 for
varieties with $h^{2,0} = 1$}. Duke Mathematical Journal {\bf 166}  (2017),   739--799.
	%
	\bibitem[Moo17b]{Moonen}
	B. Moonen, {\it Families of Motives and the Mumford--Tate Conjecture}, Milan J. Math. {\bf 85}, 257--307 (2017).
%
\bibitem[Mor87]{MorrIsog}
D. R. Morrison. {\it Isogenies between algebraic surfaces with geometric genus one}.
In: Tokyo Journal of Mathematics {\bf 10.1} (1987), pp. 179--187.
%
	\bibitem[Mur90]{Murre_motsurf}
	J. P. Murre, {\it On the motive of an algebraic surface}, in: J. Reine und Angew. Math. {\bf 409} (1990),   190--204.
	%
	\bibitem[Pen11]{pe11}
	M. Penegini,
	{\it The classification of isotrivially fibred surfaces with $p_g=q=2$}. Collect. Math. {\bf 62}, 239--274 (2011).
	%
\bibitem[PePi20]{MatteoRoberto}
M. Penegini, R. Pignatelli,  {\it A note on a family of surfaces with $p_g=q=2$ and $K^2=7$},
Boll. Unione Mat. Ital. {\bf 15}, 305--331 (2022). 
%
\bibitem[PePo13a]{PP13b}
M. Penegini,  F. Polizzi,
	{\it On surfaces with $p_g=q=2,\ K^2=5$
	and Albanese map of degree $3$}, 
	Osaka J. Math.
	{\bf 50}, 643--686 (2013).
%
\bibitem[PePo13b] {PP}
M. Penegini, F. Polizzi, \textit{On surfaces with $p_g=q=2, K^2=6$ and Albanese map of degree $2$}, Canad. J. Math. {\bf 65} (2013), 195--221
%
\bibitem[PePo14]{PP14}
	M. Penegini, F. Polizzi, 
	{\it A new family of surfaces with $p_g= q= 2$ and $K^2= 6$ whose Albanese map has degree $4$}, 
	J. London Math. Soc.
	{\bf 90}, 741--762 (2014).
%
\bibitem[Pig20]{Pi17}
R. Pignatelli, {\it Quotients of the square of a curve by a mixed action, further
quotients and Albanese morphisms}. Revista Matematica Complutense {\bf 33} (2020), 911--931.
%
\bibitem[PiPo17]{PiPol17}
R. Pignatelli, F. Polizzi {\it A family of surfaces with $p_g=q=2$, $K^2=7$ and Albanese map of degree $3$}
Mathematische Nachrichten, 2017, {\bf 290}, pp. 2684--2695
%
\bibitem[PRR20]{PRR}
	F. Polizzi, C. Rito, X. Roulleau,
	{\it A pair of rigid surfaces with $p_g=q=2$ and $K^2=8$ whose universal cover is not the bidisk},
	Int. Math. Res. Not. IMRN 2020, {\bf 11}, 3453--3493 (2020).
%
\bibitem[Rit18] {rito}
C. Rito \textit{New surfaces with $K^2=7$  and $p_g=q\leq 2$},  Asian J. Math. {\bf 22} (2018), no. 6, 1117–1126.
%
\bibitem[Sch91]{Scholl}
A. J. Scholl {\it Classical motives}. In: Motives (Seattle, WA, 1991). Vol. {\bf 55}. Proc.
Sympos. Pure Math. Amer. Math. Soc., Providence, RI, 1994, pp. 163--187.
%
\bibitem[Sil08]{SilAEC}
J.H. Silverman, \textit{The Arithmetic of Elliptic Curves}, 2nd Edition, Springer GTM vol {\bf 106}, (2008). 

\end{thebibliography}
\end{document}